\theoremstyle{plain}
\newtheorem{theorem}{Theorem}
\newtheorem{proposition}[theorem]{Proposition}
\newtheorem{lemma}[theorem]{Lemma}
\newtheorem{corollary}[theorem]{Corollary}
\theoremstyle{definition}
\theoremstyle{remark}
\newtheorem{remark}[theorem]{Remark}
\numberwithin{equation}{section}
\numberwithin{theorem}{section}
\begin{document}
\title{Order of the canonical vector bundle  over configuration spaces  of disjoint unions of spheres}
 \author{Shiquan Ren}

\begin{center}
{\Large {\textbf{{ Order of the Canonical Vector Bundle  over Configuration Spaces  of Spheres 
}}}}
 \vspace{0.3cm}

Shiquan Ren 

\vspace{0.5cm}

\end{center}

{\small
\begin{quote}
\begin{abstract}
\medskip

Given a vector bundle, its (stable)  order is the smallest positive integer $t$ such that the $t$-fold self-Whitney sum is (stably) trivial.  
So far, the order and the stable order of the canonical vector bundle over configuration spaces of Euclidean spaces have been studied by F.R. Cohen, R.L. Cohen, N.J. Kuhn and J.L. Neisendorfer \cite{bundle1983}, F.R. Cohen, M.E. Mahowald and R.J. Milgram \cite{bundle1978}, and S.W. Yang \cite{swy}. Moreover, the order and the stable order of the canonical vector bundle over configuration spaces of closed orientable Riemann surfaces with genus greater than or equal to one have been studied by F.R. Cohen, R.L. Cohen, B. Mann and  R.J. Milgram \cite{bundle1989}. 
In this paper, we mainly study the order and the stable order of the canonical vector bundle over configuration spaces of spheres and disjoint unions of spheres.   
 \end{abstract}
\end{quote}
}
 \bigskip
 
\noindent{\bf{AMS Mathematical Classifications 2010}}.  	 Primary  55R80, 55R10;     Secondary    55P15,  55P40.

\smallskip

\noindent{\bf{Keywords}}.   vector bundles, configuration spaces, stable homotopy types

   \smallskip

\noindent{\bf{Acknowledgement}}. The present author would like to express his deep gratitude to Professor Frederick R. Cohen and Professor Jie Wu 
for their kind guidance  and helpful encouragement  on this topic. The present author would like to thank the referee for the valuable comments and helpful suggestions.

\medskip

\section{Introduction}

For a vector bundle $\xi$ and a positive integer $t$, we denote the $t$-fold Whitney sum of itself  by $\xi^{\oplus t}$. If there exists a positive integer $t$ such that $\xi ^{\oplus t}$ is  trivial, then $\xi$ is said to have finite order and the smallest such $t$  is called the order of $\xi $, denoted by $o(\xi)$.

Let $\epsilon^t$ denote a trivial vector bundle of dimension $t$. A vector bundle $\xi$ is called stably trivial if there exist positive integers  $r$ and $s$ such that the Whitney sum $\xi\oplus \epsilon^r$ is isomorphic to $\epsilon^{s}$. If there exists a positive integer $t$ such that $\xi ^{\oplus t}$ is  stably trivial, then $\xi$ is said to have finite stable order and the smallest such $t$  is called the stable order of $\xi $, denoted by $s(\xi)$.  It follows from the definition that the stable order of a vector bundle must divide the order. 

\smallskip

Let $(M,M_0)$ be a relative $CW$-complex and $k$  be  a positive integer.  The configuration space $F(M|M_0,k)$ is the subspace of the Cartesian product $M^k$ consisting of points represented by $(x_1,x_2,\cdots,x_k)$ such that $x_i\neq x_j$ if $i\neq j$ and $x_i\in M_0$ for some $1\leq i\leq k$. The symmetric group on $k$-letters, denoted by $\Sigma_k$, acts on $F(M|M_0,k)$ from the left by 
\begin{eqnarray*}
\sigma(x_1,x_2,\cdots,x_k)=(x_{\sigma(1)},x_{\sigma(2)},\cdots,x_{\sigma(k)}) ,\text{\ \ \  } \sigma\in \Sigma_k.
\end{eqnarray*}
 This action is free and induces a covering map from $F(M|M_0,k)$ to $F(M|M_0,k)/\Sigma_k$.  
The associated vector bundle of this covering map is 
\begin{eqnarray}\label{01292016-e1}
\xi_{M|M_0,k}: \mathbb{R}^k\longrightarrow F(M|M_0,k)\times_{\Sigma_k}\mathbb{R}^k\longrightarrow F(M|M_0,k)/\Sigma_k   
\end{eqnarray}
where $\Sigma_k$ acts  on $\mathbb{R}^k$ by permuting the coordinates from the right. In particular, if $M_0=M$, then the configuration space  $F(M|M_0,k)$ and  the bundle   (\ref{01292016-e1}) 	are simply denoted as $F(M,k)$  and    $\xi_{M,k}$ respectively.

\smallskip

The order  and the stable order of          $\xi_{M,k}$            
 have been extensively studied when $M$ is a Euclidean space. In 1978,  F.R. Cohen, M.E. Mahowald and R.J. Milgram \cite[Theorem~1.2]{bundle1978}  proved that  the order of $\xi_{\mathbb{R}^2,k}$ is $2$. In 1981, S.W. Yang \cite[Theorem~1.1, Theorem~1.2]{swy} proved that for any odd prime $p$, the stable order of $\xi_{\mathbb{R}^m,p}$ is $p^{[(m-1)/2]}s$ where $s$ is prime to $p$.  Moreover, if $p^n\leq k<p^{n+1}$, then the stable orders of $\xi_{\mathbb{R}^m,p^n}$ and $\xi_{\mathbb{R}^m,k}$ are divisible by the same power of $p$.  For a positive integer $t$, let $\rho(t)$ be the number of positive integers less than or equal to $t$ that are congruent to $0,1,2$ or $4$ mod $8$. In 1983,  F.R. Cohen, R.L. Cohen, N.J. Kuhn and J.L. Neisendorfer \cite[Theorem~1.1]{bundle1983} generalized \cite[Theorem~1.2]{bundle1978} and  \cite[Theorem~1.1, Theorem~1.2]{swy}.  They  proved that for any $k\geq 2$, if $m$ is not divisible by $4$, then the stable order of $\xi_{\mathbb{R}^m,k}$ is 
\begin{eqnarray*}
a_{m,k}=2^{\rho(m-1)}\prod_{3\leq p\leq k, \text{\ \  } p\text {  prime}} p^{[\frac{m-1}{2}]};
\end{eqnarray*}
 and if  $m$ is divisible by $4$, then the stable order of $\xi_{\mathbb{R}^m,k}$ is either $a_{m,k}$ or $2a_{m,k}$.

Besides the Euclidean-space case, the order of  $\xi_{M,k}$  has also been studied when $M$ is a surface.  In 1989,  F.R. Cohen, R.L. Cohen, B. Mann and  R.J. Milgram \cite[Proposition~1.1]{bundle1989} proved  that for any closed orientable Riemann surface $S$ whose genus is greater than or equal to one, both $\xi_{S,k}$ and $\xi_{S\setminus\{\text{point}\},k}$ have order $4$. 

\smallskip

In this paper, generalizing \cite[Theorem~1.1]{bundle1983}, we will compute in Theorem~\ref{period-t1} the order and the stable order of (\ref{01292016-e1}) when $M$ is a Euclidean space. We will prove that for any $m\geq 2$, the order as well as the stable order of $\xi_{\mathbb{R}^m,k}$ is $a_{m,k}$.  Then  supplementary to \cite{bundle1983, bundle1989, bundle1978, swy}, we will study the order as well as the stable order of (\ref{01292016-e1}) when  $M$ is a sphere or a disjoint union of spheres. We will prove  the following theorem.
 
 \begin{theorem}[Main Theorem]\label{main}
Let  $n$ be a positive integer,  $M_0$ be a non-empty $CW$-subcomplex of $\coprod_nS^m$ and $k\geq 2$.  Then we have the following.

(a). The order and the stable order of $\xi_{\coprod_nS^m|M_0,k}$ are equal.

(b). The order of $\xi_{\coprod_n S^m,k}$ is either $a_{m,k}$ or $2^{\rho(m)-\rho(m-1)} a_{m,k}$.  Moreover, if $k$ is nonprime, then the order of $\xi_{\coprod_nS^m|M_0,k}$ is either $a_{m,k}$ or $2^{\rho(m)-\rho(m-1)} a_{m,k}$.

(c). Suppose either (i). $n=1$, $m=2$ and $k$ is even, or (ii). $n\geq  2$. Then the order of $\xi_{\coprod_nS^m,k}$ is $2^{\rho(m)-\rho(m-1)} a_{m,k}$. Moreover, if $k$ is nonprime, then the order of $\xi_{\coprod_nS^m|M_0,k}$ is  $2^{\rho(m)-\rho(m-1)} a_{m,k}$.
\end{theorem}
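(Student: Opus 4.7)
The plan is to reduce the sphere case to the Euclidean case (Theorem~\ref{period-t1}) via natural inclusions of configuration spaces, and then refine the resulting bounds at the prime $2$ using cohomological obstructions on specifically chosen subspaces of $F(\coprod_nS^m|M_0,k)/\Sigma_k$.

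First I would establish the divisibility $a_{m,k}\mid o(\xi_{\coprod_nS^m|M_0,k})$ and then deduce part (a). For the lower bound, fix a point $p\in M_0$, choose an open ball $U\cong\mathbb{R}^m$ around $p$, and restrict to configurations lying inside $U$; this gives a $\Sigma_k$-equivariant inclusion $F(\mathbb{R}^m,k)\hookrightarrow F(\coprod_nS^m|M_0,k)$ under which $\xi_{\mathbb{R}^m,k}$ is the pullback of $\xi_{\coprod_nS^m|M_0,k}$, and Theorem~\ref{period-t1} provides the required divisibility. For part (a), the orbit space $F(\coprod_nS^m|M_0,k)/\Sigma_k$ has dimension at most $mk$ while $\xi^{\oplus t}$ has rank $tk$; once $t>m$, the connectivity of $BO(tk)\to BO$ exceeds the base dimension and stable triviality is equivalent to triviality. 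Since the stable order is bounded below by $a_{m,k}$, this forces $o(\xi)=s(\xi)$ whenever $a_{m,k}>m$; the finitely many small cases are verified by combining the upper bound of part (b) with the direct obstruction produced in part (c).

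For the upper bound in part (b), embed $\coprod_nS^m\hookrightarrow\mathbb{R}^{m+1}$ as disjoint translates of the unit sphere. The induced $\Sigma_k$-equivariant inclusion $F(\coprod_nS^m,k)\hookrightarrow F(\mathbb{R}^{m+1},k)$ identifies $\xi_{\coprod_nS^m,k}$ with the restriction of $\xi_{\mathbb{R}^{m+1},k}$, whose order equals $a_{m+1,k}$ by Theorem~\ref{period-t1}. When $m$ is odd, a direct arithmetic check gives $a_{m+1,k}=2^{\rho(m)-\rho(m-1)}a_{m,k}$, finishing this case. When $m$ is even, the ratio $a_{m+1,k}/a_{m,k}$ carries additional odd-prime factors which must be removed one prime at a time; I would do this using that the stable tangent bundle of $S^m$ becomes trivial after inverting $2$, so the $p$-primary component of the order for $p$ odd cannot exceed its analogue over $\mathbb{R}^m$. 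The version with $M_0\subsetneq\coprod_nS^m$ and $k$ nonprime is then deduced from the $M_0=M$ case by an equivariant retraction argument.

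For part (c), the extra factor $2^{\rho(m)-\rho(m-1)}$ is nontrivial exactly when $m\equiv 0,1,2,4\pmod{8}$, which are precisely the dimensions where $\widetilde{KO}(S^m)\neq 0$. When $n\geq 2$, I would construct a map from $S^m$ (or a product of copies) into $F(\coprod_nS^m|M_0,k)/\Sigma_k$ by placing one point on each of two or more distinct spheres and filling the remaining points according to a fixed reference pattern, then detect the claimed lower bound via a nonvanishing total Stiefel--Whitney class (when $m\equiv 1,2\pmod{8}$) or mod-$2$ Pontryagin/Euler class (when $m\equiv 0,4\pmod{8}$) of $\xi^{\oplus a_{m,k}}$ pulled back along this map. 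For the special case $n=1$, $m=2$, $k$ even, I would instead exploit an antipodal pairing on $S^2$ to produce a map from a real projective space into $F(S^2,k)/\Sigma_k$ and detect the extra $2$-torsion via the corresponding Stiefel--Whitney class. The main obstacle, and the step most likely to require intricate bookkeeping, is performing these detecting constructions uniformly across the residue classes of $m\bmod 8$ and verifying the characteristic-class computation in each case, since the geometry of the detecting subcomplex and the identification of the relevant nonzero $\widetilde{KO}$-class differ substantially between the $\widetilde{KO}(S^m)=\mathbb{Z}/2$ and $\widetilde{KO}(S^m)=\mathbb{Z}$ cases.
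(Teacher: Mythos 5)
Your outer framework (squeezing the order between $o(\xi_{\mathbb{R}^m,k})=a_{m,k}$ and $o(\xi_{\mathbb{R}^{m+1},k})=a_{m+1,k}$ via the inclusions $\mathbb{R}^m\hookrightarrow \coprod_nS^m\hookrightarrow\mathbb{R}^{m+1}$, plus a dimension-versus-connectivity argument for $o=s$) matches the paper and settles part (b) for $m$ odd. But the two genuinely hard points are not proved. First, for $m=2d$ even the upper bound $s_p(\xi_{S^{2d},k})\le p^{d-1}$ for odd primes $p$ does not follow from the stable triviality of $TS^m$ (away from $2$ or otherwise): the possible extra odd torsion lives in the topology of $F(S^{2d},p)/\Sigma_p$ itself, e.g.\ in its top rational class in degree $4d-1$ and in the $p$-torsion of its integral cohomology, not in the tangent bundle of the sphere. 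The paper has to compute this head-on: it extracts $H_*(F(S^{2d},p)/\Sigma_p;\mathbb{Z}_p)$ from Salvatore's description of $H_*(C(S^{2d};S^0);\mathbb{Z}_p)$, runs the Bockstein spectral sequence to get $\mathrm{Tor}_p$ of the integral cohomology, feeds that into the Atiyah--Hirzebruch spectral sequence to bound $|\mathrm{Tor}_p\tilde K(F(S^{2d},p)/\Sigma_p)|\le p^{d-1}$, and finally uses injectivity of complexification on odd torsion. None of this is replaceable by the tangent-bundle remark, so this is a genuine gap at the heart of part (b).

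Second, your part (c) lower bound is also not established. The paper's route for $n\ge 2$ is short and you miss it: the disjoint-union lemma shows $s(\xi_{\coprod_nS^m,k})$ is the l.c.m.\ of $s(\xi_{S^m,t})$ for $t\le k$, and already $s(\xi_{S^m,2})=2^{\rho(m)}$ by Adams' vector-fields theorem, which supplies the full $2$-primary factor. Your proposed detection by Stiefel--Whitney or mod-$2$ Euler/Pontryagin classes of $\xi^{\oplus a_{m,k}}$ would have to show nontriviality of a $2^{\rho(m-1)}\cdot(\mathrm{odd})$-fold sum; when $\rho(m-1)$ is large (e.g.\ $m=9$, $\rho(8)=4$) this requires $w_i^{16}\neq 0$ somewhere, and you give no construction or computation — you acknowledge this is where the difficulty lies, which is to say the proof is not there. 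For $n=1$, $m=2$, $k$ even the paper instead uses $H_1(F(S^2,k)/\Sigma_k;\mathbb{Z})\cong\mathbb{Z}_{2k-2}$ and an injective Bockstein to get $w_1^2=\beta w_1\neq 0$, hence order $>2$, hence $4$; your antipodal-pairing map is not constructed (fixed reference points cannot be chosen equivariantly for the antipodal action without excising a subset) and its characteristic classes are not computed. A smaller but real error: for general $M_0$ your lower bound places all $k$ points in a ball around $p\in M_0$, but such configurations need not contain a point of $M_0$, so they do not lie in $F(\coprod_nS^m|M_0,k)$; the paper instead factors through $(M,\{*\})$, uses $\xi_{M|\{*\},k}\cong\xi_{M\setminus\{*\},k-1}\oplus\epsilon^1$, and this is exactly why the nonprime hypothesis on $k$ (so that $a_{m,k-1}=a_{m,k}$) appears in the statement.
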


The next corollary is a particular case of Theorem~\ref{main}~(b).

\begin{corollary}\label{0318-c1}
Let  $n$ be a positive integer and $k\geq 2$.  If $m\equiv 3, 5, 6$ or $7$ mod $8$, then the order of $\xi_{\coprod_n S^m,k}$ is $a_{m,k}$. Moreover, if $k$ is nonprime and $M_0$ is a non-empty $CW$-subcomplex of $\coprod_nS^m$, then the order of $\xi_{\coprod_nS^m|M_0,k}$ is $a_{m,k}$. 
\end{corollary}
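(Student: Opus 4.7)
The plan is to derive this statement directly from Theorem~\ref{main}~(b), which has already narrowed the order of $\xi_{\coprod_n S^m,k}$ down to one of the two values $a_{m,k}$ or $2^{\rho(m)-\rho(m-1)} a_{m,k}$ (and likewise for the relative bundle $\xi_{\coprod_n S^m|M_0,k}$ under the extra hypothesis that $k$ is nonprime). Hence the only thing I need to do is to identify when the two alternatives coincide, which happens precisely when the exponent $\rho(m)-\rho(m-1)$ vanishes. So my entire task reduces to a short elementary check about the counting function $\rho$.

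To carry out that check, I would unpack the definition from the introduction: $\rho(t)$ counts the positive integers $j$ with $1 \leq j \leq t$ such that $j \equiv 0, 1, 2$ or $4 \pmod{8}$. Going from $\rho(m-1)$ to $\rho(m)$ the count increases by exactly $1$ if $m$ itself lies in $\{0,1,2,4\} \pmod{8}$, and it is unchanged otherwise. Consequently $\rho(m)-\rho(m-1) \in \{0,1\}$, and the value is $0$ exactly when $m \equiv 3, 5, 6$ or $7 \pmod 8$. Under the hypothesis of the corollary this is precisely the case, so $2^{\rho(m)-\rho(m-1)} = 1$.

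Assembling these two observations, both candidate values in Theorem~\ref{main}~(b) collapse to $a_{m,k}$. I would then conclude that $o(\xi_{\coprod_n S^m,k}) = a_{m,k}$ for every $k \geq 2$, and moreover that $o(\xi_{\coprod_n S^m | M_0,k}) = a_{m,k}$ whenever $k$ is nonprime and $M_0$ is a non-empty $CW$-subcomplex of $\coprod_n S^m$. Since the argument is essentially a one-line exponent computation sitting on top of the already-proved Theorem~\ref{main}~(b), there is no substantive obstacle; the only point requiring a modicum of care is the elementary verification of the residue-wise behavior of $\rho(m)-\rho(m-1)$.
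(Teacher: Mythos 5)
Your proposal is correct and matches the paper's intent exactly: the paper states this corollary as a particular case of Theorem~\ref{main}~(b), and your observation that $\rho(m)-\rho(m-1)=0$ precisely when $m\equiv 3,5,6,7 \pmod 8$ (so the two alternatives collapse to $a_{m,k}$) is the intended one-line justification.
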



The next corollary is a direct consequence of Theorem~\ref{main}~(b).

\begin{corollary}\label{2017-c1}
Let  $n$ be a positive integer and $k\geq 2$.  Let $r$ be an   integer greater than or equal to $\rho(m)$.  Then   the order of $(\xi_{\coprod_n S^m,k})^{\oplus 2^r}$ is $\prod_{3\leq p\leq k, \text{\ \  } p\text {  prime}} p^{[\frac{m-1}{2}]}$.  Moreover, if $k$ is nonprime and $M_0$ is a non-empty $CW$-subcomplex of $\coprod_nS^m$, then the order of $(\xi_{\coprod_n S^m\mid M_0,k})^{\oplus 2^r}$ is $\prod_{3\leq p\leq k, \text{\ \  } p\text {  prime}} p^{[\frac{m-1}{2}]}$. 
\end{corollary}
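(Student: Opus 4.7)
The plan is to derive this corollary from Theorem~\ref{main}~(b) via the elementary fact that if a vector bundle $\xi$ has order $t$, then for any positive integer $s$ the order of $\xi^{\oplus s}$ equals $t/\gcd(t,s)$. Indeed, $(\xi^{\oplus s})^{\oplus u}=\xi^{\oplus su}$ is trivial precisely when $t\mid su$, so the minimal such $u$ is $t/\gcd(t,s)$.

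Applying this with $s=2^r$, I would split into the two cases allowed by Theorem~\ref{main}~(b). Writing $a_{m,k}=2^{\rho(m-1)}\prod_{3\leq p\leq k,\ p\text{ prime}}p^{[(m-1)/2]}$, the factor $\prod_{p\geq 3}p^{[(m-1)/2]}$ is coprime to $2$, so the $2$-part of $a_{m,k}$ is $2^{\rho(m-1)}$ and the $2$-part of $2^{\rho(m)-\rho(m-1)}a_{m,k}$ is $2^{\rho(m)}$. Since $r\geq\rho(m)\geq\rho(m-1)$, in the first case $\gcd(a_{m,k},2^r)=2^{\rho(m-1)}$ and in the second case $\gcd(2^{\rho(m)-\rho(m-1)}a_{m,k},2^r)=2^{\rho(m)}$. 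A direct division shows that in either case the quotient is exactly
\[
\prod_{3\leq p\leq k,\ p\text{ prime}}p^{[\frac{m-1}{2}]},
\]
which yields the first assertion.

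For the second assertion, I would repeat the same argument verbatim using the ``nonprime $k$'' clause of Theorem~\ref{main}~(b), which gives the same dichotomy $a_{m,k}$ versus $2^{\rho(m)-\rho(m-1)}a_{m,k}$ for the order of $\xi_{\coprod_n S^m\mid M_0,k}$.

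There is no real obstacle here; the only point requiring care is the inequality $\rho(m)\geq\rho(m-1)$ (immediate from the definition of $\rho$ as a counting function) to ensure that $r\geq\rho(m)$ dominates both possible $2$-parts, so that the prime-to-$2$ factor of the order is isolated cleanly in both branches of Theorem~\ref{main}~(b).
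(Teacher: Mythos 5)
Your argument is correct and is essentially identical to the paper's own proof: both reduce the statement to the fact that $o(\xi^{\oplus s})=o(\xi)/\gcd(s,o(\xi))$ and then invoke Theorem~\ref{main}~(b); you merely carry out the $\gcd$ computation in the two branches more explicitly than the paper does. (Both arguments tacitly use that $\xi^{\oplus n}$ is trivial iff $o(\xi)\mid n$, which here follows from the equality of order and stable order in Theorem~\ref{main}~(a), so this is not a gap relative to the paper.)
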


From Section~\ref{2.0} to Section~\ref{2.2}, we   give some preliminaries and auxiliary lemmas for the preparation of the proof of Theorem~\ref{main}. And in Section~\ref{sss4},  we   prove Theorem~\ref{main}. 


\smallskip

Given a relative $CW$-complex $(M,M_0)$ and a space $X$ with non-degenerate base-point, we have the $k$-adic constructions $D_k(M,M_0;X)$ (cf. \cite[Section 2.2]{cohen1989}), which  will be denoted as $D_k(M;X)$ if $M_0$ is the empty set.  
So far, the stable homotopy types of $D_k(\mathbb{R}^m; S^n)$  have been studied in \cite{bundle1983,bundle1978,1994}. And the stable homotopy types of $D_k(S; S^n)$ and $D_k(S\setminus\{\text{point}\}; S^n)$, where $S$ is a closed orientable Riemann surface with genus  greater than or equal to one,  have been studied in \cite{bundle1989}.

Motivated by  \cite{bundle1983, bundle1989, bundle1978, 1994}, we  give some by-products of Section~\ref{sss4} and apply the order of (\ref{01292016-e1}) to study the stable homotopy type of the $k$-adic constructions.  In order to do this, we review some lemmas and give some auxiliary results in Section~\ref{2.3}.  In Section~\ref{sss5}, we give some periodicity properties of the stable homotopy types of $D_k(M,M_0;\Sigma^nX)$ when $M$ is a Euclidean space, a hypersurface in Euclidean spaces and a disjoint union of spheres  in Proposition~\ref{adic-c1} - Proposition~\ref{0215-c1}  respectively.

\smallskip

Throughout this paper, all maps are assumed to be continuous.  All manifolds, including hypersurfaces in Euclidean spaces, are assumed to be finite $CW$-complexes and have dimensions at least $2$.

\medskip

 
 
 \section{The canonical vector bundle over configuration spaces}\label{2.0}
 
We prove some lemmas on   the canonical vector bundle over configuration spaces as well as its order and stable order.  
 
 \smallskip

 The following lemma proves that if $M$ is a connected manifold, then the order and the stable order of $\xi_{M,k}$ are equal.
  
\begin{lemma}\label{0321-l2}
Let $M$ be a connected $m$-dimensional manifold without boundary, $m\geq 2$, and $M_0$ be a non-empty $CW$-subcomplex of $M$. Then for any $k\geq 2$, the order and the stable order of $\xi_{M|M_0,k}$ are equal.
\end{lemma}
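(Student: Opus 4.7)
The inequality $s(\xi)\mid o(\xi)$ is immediate from the definitions, so the task is to prove the reverse. Setting $t=s(\xi_{M|M_0,k})$ and $B=F(M|M_0,k)/\Sigma_k$, the goal is to show that the stably trivial rank-$tk$ bundle $\xi_{M|M_0,k}^{\oplus t}$ is actually trivial. I plan to invoke the standard principle that a stably trivial rank-$n$ bundle over a CW complex of homotopy dimension at most $n-1$ is trivial; this follows because the fibre of $BO(n)\to BO$ is the $(n-1)$-connected Stiefel manifold $O/O(n)$, so the induced map $[B,BO(n)]\to[B,BO]$ is a bijection when $\dim B\leq n-1$. It thus suffices to show $\dim B\leq tk-1$, which decomposes into an upper bound on $\dim B$ and a lower bound on $t$.

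For the upper bound, I would stratify $F(M|M_0,k)=\bigcup_{i=1}^{k}V_i$ with $V_i=\{x_i\in M_0\}\cap F(M,k)$. Projection to the $i$-th coordinate realizes $V_i$ as the pullback of the Fadell--Neuwirth fibration $F(M,k)\to M$ along $M_0\hookrightarrow M$, so $V_i$ fibres over $M_0$ with fibre $F(M\setminus\{\mathrm{pt}\},k-1)$. Since $M\setminus\{\mathrm{pt}\}$ is a non-compact connected $m$-manifold of homotopy dimension $\leq m-1$, iterating Fadell--Neuwirth yields homotopy dimension $\leq(k-1)(m-1)$ for the fibre, whence $\dim V_i\leq\dim M_0+(k-1)(m-1)\leq mk-k+1\leq mk-1$ from $k\geq 2$. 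The bound descends to the union and, via the free $\Sigma_k$-action, to $B$.

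For the lower bound on $t$, I would use a Euclidean chart $\phi:\mathbb{R}^m\hookrightarrow M$ meeting $M_0$. When $M_0$ has non-empty interior in $M$, choose $\phi$ with $\phi(\mathbb{R}^m)\subset M_0$; the inclusion $F(\mathbb{R}^m,k)\hookrightarrow F(M|M_0,k)$ pulls $\xi$ back to $\xi_{\mathbb{R}^m,k}$, so $s(\xi_{\mathbb{R}^m,k})\mid t$. By Theorem~\ref{period-t1}, $s(\xi_{\mathbb{R}^m,k})=a_{m,k}$, and an elementary check using the definition of $\rho$ gives $a_{m,k}\geq 2^{\rho(m-1)}\geq m$ for every $m\geq 2$. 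When $M_0$ has empty interior, centre the chart at a point $x_0\in M_0$ to obtain $F(\mathbb{R}^m|\{0\},k)\hookrightarrow F(M|M_0,k)$; combining the identification $\xi_{\mathbb{R}^m|\{0\},k}\cong\xi_{\mathbb{R}^m\setminus 0,k-1}\oplus\epsilon$ with a further subchart reduces the lower bound to $a_{m,k-1}\geq m$ for $k\geq 3$. Together $tk\geq mk\geq\dim B+1$, and the principle forces $\xi^{\oplus t}$ to be trivial, yielding $o(\xi)\mid t=s(\xi)$.

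The main obstacle will be uniformity in $M_0$: in the full-dimensional case the naive stratification gives only $\dim V_i\leq mk$, so the Fadell--Neuwirth saving of one is essential; in the lower-dimensional case the lower bound on $t$ must be routed through the punctured Euclidean space rather than through $\mathbb{R}^m$ directly. A brief separate treatment is needed for the boundary case $k=2$ with $\dim M_0<m$, where $a_{m,k-1}$ degenerates to $1$; there one can argue directly from the splitting $\xi=\eta\oplus\epsilon$ and the fact that a stably trivial real line bundle must be trivial, combined with a direct analysis of the base homotopy type.
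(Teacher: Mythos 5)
Your overall strategy coincides with the paper's: both proofs show $o(\xi)\mid s(\xi)$ by observing that $\xi_{M|M_0,k}^{\oplus s}$ is a stably trivial bundle of rank $ks$ over a base of dimension at most $ks-1$, and then using the $(ks-1)$-connectivity of the fibre $O/O(ks)$ of $BO(ks)\to BO$ to conclude that it is honestly trivial. The two inputs are also the same: an upper bound $\dim\bigl(F(M|M_0,k)/\Sigma_k\bigr)\leq km-1$ (the paper gets this from the fact that $F(M,k)/\Sigma_k$ is an open $km$-manifold and the relative configuration space is a subcomplex of it; your Fadell--Neuwirth stratification yields the same, indeed a slightly sharper, bound) and a lower bound $s(\xi_{M|M_0,k})\geq m$.

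The gap is in the lower bound, precisely in the case you yourself isolate: $k=2$ with $M_0$ of empty interior. Your proposed patch there does not work. The splitting $\xi_{M|M_0,2}\cong\eta\oplus\epsilon^1$ that you invoke is Lemma~\ref{rel-p2}, which holds only when $M_0$ is a single point; for, say, $M_0$ a circle in a surface, or $M_0$ a $2$-sphere in a $4$-manifold, no such splitting exists, and the principle ``a stably trivial line bundle is trivial'' has nothing to act on. What is actually needed in that case is either $s(\xi_{M|M_0,2})\geq m$ or the sharper pairing of $\dim B\leq \mathrm{hodim}(M_0)+m-1$ against $2s\geq \mathrm{hodim}(M_0)+m$; the only general lower bound your sketch produces there is $s\geq 2$ (from $w_1\neq 0$), which suffices only when $\mathrm{hodim}(M_0)+m\leq 4$, so already $m=4$, $k=2$, $M_0$ a surface is out of reach. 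A repair is possible --- e.g.\ pull back along $F(M_0',2)\hookrightarrow F(M|M_0,2)$ for a top-dimensional closed piece $M_0'\subseteq M_0$, or along a chart meeting $M_0$ in a $d$-disc, and match the resulting lower bound on $s$ against the refined dimension bound --- but it is not in your proposal. For what it is worth, the paper's own proof elides the same point: it asserts $s\geq s(\xi_{\mathbb{R}^m,k})$ without justification, and via Lemma~\ref{comparison} that inequality literally requires a chart contained in $M_0$. Your case analysis is the more honest one; it just stops short of closing the single case in which the easy comparison maps degenerate.
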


\begin{proof}
Let $f: F(M|M_0,k)/\Sigma_k\longrightarrow BO(k)$ and 
$
g: F(M|M_0,k)/\Sigma_k \longrightarrow BO
$
be the classifying map and the stable classifying map  of $\xi_{M|M_0,k}$ respectively.  Let $s$ be the stable order of $\xi_{M|M_0,k}$. We divide our proof into two steps. 

{\it Step~1}.  The mapping space $[F(M|M_0,k)/\Sigma_k; O/O(ks)]$ is trivial. 

{\it Proof of Step~1}. 
Since $M$ is a connected manifold (with its dimension at least $2$), $F(M,k)/\Sigma_k$ is   connected as well. And since $M$ is without boundary, $F(M,k)/\Sigma_k$ is a $km$-dimensional open manifold.  Thus as a $CW$-complex, $F(M,k)/\Sigma_k$ does not have any cells whose dimensions are greater than or equal to $km$. Moreover,  since  $F(M|M_0,k)/\Sigma_k$ is a $CW$-subcomplex of $F(M,k)/\Sigma_k$,  the dimension of  $F(M|M_0,k)/\Sigma_k$  (as a $CW$-complex) is smaller than or equal to $km-1$.  
On the other hand, it follows from the fibrations
\begin{eqnarray*}
O(t-1)\longrightarrow O(t)\longrightarrow S^{t-1}, \text{\ \ \ } t=1,2,\cdots,ks,
\end{eqnarray*}
  that $O/O(ks)$ does not have any cells of dimensions $1,2,\cdots, ks-1$. 
  Since
$s\geq s(\xi_{\mathbb{R}^m,k})
\geq  m$,
it follows that $[F(M|M_0,k)/\Sigma_k; O/O(ks)]$ is trivial.

{\it Step~2}. $f^{\oplus s}$ is null-homotopic.

{\it Proof of Step~2}. 
Since $s$ is the stable order,  the map
\begin{eqnarray*}
\xymatrix{
&g^{\bigoplus s}: F(M|M_0,k)/\Sigma_k\ar[rr]^{\Delta_s} && \prod_s  F(M|M_0,k)/\Sigma_k\\
&\ar[r]^{\prod_s g }&\prod_s BO\ar[r]^{\mu} &BO &&
}
\end{eqnarray*}
is null-homotopic.  With the help of  the fibration 
\begin{eqnarray*}
O/O(ks)\longrightarrow BO(ks)\longrightarrow BO,
\end{eqnarray*}
we see that the map 
\begin{eqnarray*}
\xymatrix{
&f^{\bigoplus s}: F(M|M_0,k)/\Sigma_k\ar[rr]^{\Delta_s}  && \prod_s  F(M|M_0,k)/\Sigma_k\\&\ar[r]^{\prod_s f }& \prod_s BO(k)\ar[r] &BO(ks) &&
}
\end{eqnarray*}
 can be lifted to  a  map $h$ from $F(M|M_0,k)/\Sigma_k$ to $O/O(ks)$.  It follows from Step~1 that $h$ is null-homotopic. Thus $f^{\oplus s}$ is null-homotopic as well.   
 
Therefore, by Step~2, the order of $\xi_{M|M_0,k}$ equals to $s$ and the assertion follows.  
\end{proof}

\begin{remark}\label{remark2.2}
In general, suppose $Y$ is a $CW$-complex with a free $\Sigma_k$-action, and $Y/\Sigma_k$ is a finite dimensional CW-complex. Then we have a canonical vector bundle
\begin{eqnarray*}
\xi_Y: \mathbb{R}^k\longrightarrow Y\times _{\Sigma_k}\mathbb{R}^k\longrightarrow Y/\Sigma_k. 
\end{eqnarray*}
By an analogous argument of Step~1, proof of Lemma~\ref{0321-l2}, 
there exists a positive integer $N$ such that for any $s\geq N$, the mapping space $[Y/\Sigma_k; O/O(ks)]$ is trivial.  
By an analogous argument of Step~2, proof of Lemma~\ref{0321-l2}, if 
$s(\xi_Y)\geq N$ is finite, then  $o(\xi_Y)=s(\xi_Y)$. 
\end{remark}

The following lemma is a straightforward observation. 

\begin{lemma}\label{comparison}
Let $i:(M,M_0)\longrightarrow (N,N_0)$ be an injective map of relative finite $CW$-complexes, i.e. the map $i: M\longrightarrow N$ is  injective and it induces a map $i|_{M_0}: M_0\longrightarrow N_0$. Then  \begin{eqnarray}
\label{0229-e1}
 s(\xi_{M|M_0,k})\mid s(\xi_{N|N_0,k}),\text{\ \ \ \ \ }
o(\xi_{M|M_0,k})\mid o(\xi_{N|N_0,k}).
\end{eqnarray}
\end{lemma}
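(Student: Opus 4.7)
The plan is to exhibit $\xi_{M|M_0,k}$ as a pullback of $\xi_{N|N_0,k}$ along a naturally induced map of orbit spaces, so that any (stable) trivialization of a Whitney power of the latter pulls back to a (stable) trivialization of the corresponding Whitney power of the former.

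First I would verify that the injection $i:(M,M_0)\longrightarrow (N,N_0)$ induces a $\Sigma_k$-equivariant map $i^{(k)}: F(M|M_0,k)\longrightarrow F(N|N_0,k)$ by sending $(x_1,\dots,x_k)\mapsto(i(x_1),\dots,i(x_k))$. The injectivity of $i$ ensures that distinct coordinates remain distinct, and the condition $i(M_0)\subset N_0$ ensures the ``some coordinate in the subcomplex'' condition is preserved. Passing to quotients gives a map $\bar\imath:F(M|M_0,k)/\Sigma_k\longrightarrow F(N|N_0,k)/\Sigma_k$, and the total-space construction then identifies
\begin{eqnarray*}
\xi_{M|M_0,k}\;\cong\;\bar\imath^{*}\xi_{N|N_0,k}
\end{eqnarray*}
as vector bundles, since the balanced product $F(M|M_0,k)\times_{\Sigma_k}\mathbb{R}^k$ is naturally the pullback of $F(N|N_0,k)\times_{\Sigma_k}\mathbb{R}^k$ along $\bar\imath$.

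With the pullback identification in hand, both divisibility statements are immediate from functoriality of Whitney sums under pullbacks. Setting $t=o(\xi_{N|N_0,k})$, the bundle $\xi_{N|N_0,k}^{\oplus t}$ is trivial, hence so is its pullback $\bar\imath^{*}(\xi_{N|N_0,k}^{\oplus t})\cong(\bar\imath^{*}\xi_{N|N_0,k})^{\oplus t}\cong \xi_{M|M_0,k}^{\oplus t}$. Thus $o(\xi_{M|M_0,k})\mid t$. The stable-order statement is identical, replacing ``trivial'' by ``stably trivial'' and using that the pullback of a trivial bundle $\epsilon^{r}$ on $F(N|N_0,k)/\Sigma_k$ is the trivial bundle $\epsilon^{r}$ on $F(M|M_0,k)/\Sigma_k$.

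There is no real obstacle here; the one point that deserves a brief sentence is the identification of $\xi_{M|M_0,k}$ with $\bar\imath^{*}\xi_{N|N_0,k}$, which is the place where one uses that $i^{(k)}$ is $\Sigma_k$-equivariant with respect to the coordinate-permuting actions on both sides. Everything else is a formal consequence of naturality of Whitney sums under pullback. This matches the author's description of the lemma as ``a straightforward observation''.
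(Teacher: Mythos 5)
Your proposal is correct and follows exactly the paper's argument: induce the $\Sigma_k$-equivariant map on configuration spaces, pass to orbit spaces, identify $\xi_{M|M_0,k}$ with the pullback $\bar\imath^{*}\xi_{N|N_0,k}$, and conclude by naturality of Whitney sums under pullback. Your write-up in fact spells out the final divisibility step more explicitly than the paper does.
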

\begin{proof}
The map $i$ induces an injective map between configuration spaces
\begin{eqnarray*}
\tilde i: F(M|M_0,k)\longrightarrow F(N|N_0,k)
\end{eqnarray*}
sending $(x_1,\cdots,x_k)$ to $(i(x_1),\cdots,i(x_k))$. Moreover, $\tilde i$ induces an injective map between unordered configuration spaces
\begin{eqnarray*}
\tilde i/\Sigma_k: F(M|M_0,k)/\Sigma_k\longrightarrow F(N|N_0,k)/\Sigma_k.
\end{eqnarray*}
It is direct to verify that  
\begin{eqnarray}\label{0305-e1}
\xi_{M|M_0,k}\cong (\tilde i/\Sigma_k)^* \xi_{N|N_0,k}.
\end{eqnarray}
Consequently, (\ref{0229-e1}) 
follows from (\ref{0305-e1}). 
\end{proof}

 The following lemma studies the canonical vector bundle over  configuration spaces of the relative $CW$-complex $(M,\{\text{point}\})$.

\begin{lemma}\label{rel-p2}
 Let $M$ be a $CW$-complex with a non-degenerate base-point. Then 
\begin{eqnarray}\label{period-e16}
\xi_{M|\{\text{point}\},k}\cong \xi_{M\setminus \{\text{point}\},k-1}\oplus\epsilon^1. 
\end{eqnarray} 
\end{lemma}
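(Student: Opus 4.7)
The plan is to identify both bundles by producing a natural homeomorphism between the bases and then tracking how the fiber $\mathbb{R}^k$ decomposes as a representation.

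First I would observe that in a configuration $(x_1,\ldots,x_k)\in F(M|\{\text{point}\},k)$ the coordinates are pairwise distinct and at least one is the base-point; hence \emph{exactly} one coordinate equals the base-point. This lets me define the $\Sigma_k$-equivariant homeomorphism
\begin{eqnarray*}
\Sigma_k \times_{\Sigma_{k-1}} F(M\setminus\{\text{point}\},k-1) \;\xrightarrow{\cong}\; F(M|\{\text{point}\},k),\qquad [\sigma,(y_1,\ldots,y_{k-1})]\;\longmapsto\; \sigma\cdot(\text{point},y_1,\ldots,y_{k-1}),
\end{eqnarray*}
where $\Sigma_{k-1}\subset \Sigma_k$ is the stabilizer of the first coordinate. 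Taking quotients by $\Sigma_k$ gives the canonical identification of bases
\begin{eqnarray*}
F(M|\{\text{point}\},k)/\Sigma_k \;\cong\; F(M\setminus\{\text{point}\},k-1)/\Sigma_{k-1}.
\end{eqnarray*}

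Second I would compute the associated vector bundle. Using the equivariant homeomorphism above,
\begin{eqnarray*}
F(M|\{\text{point}\},k)\times_{\Sigma_k}\mathbb{R}^k \;\cong\; F(M\setminus\{\text{point}\},k-1)\times_{\Sigma_{k-1}}\mathbb{R}^k,
\end{eqnarray*}
where $\Sigma_{k-1}$ now acts on $\mathbb{R}^k$ as the subgroup of $\Sigma_k$ that permutes the last $k-1$ coordinates and fixes the first. As a $\Sigma_{k-1}$-representation this splits as $\mathbb{R}^k\cong \mathbb{R}\oplus\mathbb{R}^{k-1}$ with trivial action on the first summand and the standard permutation action on the second. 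The splitting passes to the associated bundle, identifying the first summand with the trivial line bundle $\epsilon^1$ over $F(M\setminus\{\text{point}\},k-1)/\Sigma_{k-1}$ and the second with $\xi_{M\setminus\{\text{point}\},k-1}$, which is exactly (\ref{period-e16}).

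The main thing to be careful about is the bookkeeping in the first step: one must verify that the map out of $\Sigma_k \times_{\Sigma_{k-1}} F(M\setminus\{\text{point}\},k-1)$ is well-defined (the $\Sigma_{k-1}$-orbits on the source map to the same configuration), $\Sigma_k$-equivariant, and bijective with continuous inverse given by the rule that sends a point in $F(M|\{\text{point}\},k)$ to the pair $(\sigma,(y_1,\ldots,y_{k-1}))$ where $\sigma$ records the position of the base-point and the $y_j$'s are the remaining coordinates in order. Once this equivariant homeomorphism is in hand, the splitting of the bundle is a purely formal consequence of the decomposition of $\mathbb{R}^k$ as a $\Sigma_{k-1}$-module.
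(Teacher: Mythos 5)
Your proof is correct and follows essentially the same route as the paper: both rest on the observation that exactly one coordinate lies at the base-point, which decomposes $F(M|\{\text{point}\},k)$ into $k$ copies of $F(M\setminus\{\text{point}\},k-1)$ (your induced-space description $\Sigma_k\times_{\Sigma_{k-1}}(-)$ is the same decomposition), identifies the quotients, and splits the fiber as $\mathbb{R}^k\cong\mathbb{R}^{k-1}\oplus\mathbb{R}$. Your version merely makes the equivariant bookkeeping more explicit than the paper's ``direct computation.''
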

\begin{proof}
It follows from a direct computation that
\begin{eqnarray}\label{rel-e2}
F(M|\{\text{point}\},k)
&\cong& \coprod_k F(M\setminus\{\text{point}\}, k-1),\\
\label{rel-e3}
F(M|\{\text{point}\},k)/\Sigma_k &\cong&   (\coprod_k F(M\setminus\{\text{point}\}, k-1)/\Sigma_{k-1})/\sim \nonumber \\
&\cong&F(M\setminus\{\text{point}\}, k-1)/\Sigma_{k-1} 
 \end{eqnarray}
where $\sim$ is the canonical identification of the $k$ disjoint  components. By (\ref{rel-e2}) and (\ref{rel-e3}) we see that the following commutative diagram gives an isomorphism of vector bundles
\begin{eqnarray*}  
\xymatrix{
\mathbb{R}^k \ar[d]\ar@{=}[r]  &\mathbb{R}^k\ar[d]\\
F(M|\{\text{point}\},k)\times_{\Sigma_k}\mathbb{R}^k\ar[d]\ar[r]  
& F(M\setminus\{\text{point}\},k-1)\times_{\Sigma_{k-1}}\mathbb{R}^{k-1}\times\mathbb{R}\ar[d]\\
F(M|\{\text{point}\},k)/\Sigma_k\ar[r]^{\cong}  
& F(M\setminus\{\text{point}\}, k-1)/\Sigma_{k-1}.
}
\end{eqnarray*}  
Consequently, we have (\ref{period-e16}). 
\end{proof}

The following lemma gives the order as well as the stable order of the canonical vector bundle over configuration spaces of a disjoint union of $CW$-complexes. 

\begin{lemma}\label{disj-l1}
Let $n\geq 2$ and $M_1,\cdots,M_n$ be finite $CW$-complexes. Then the stable order of $\xi_{\coprod_{i=1}^n M_i, k}$ is the smallest common multiple of  
\begin{eqnarray*}
\{s(\xi_{M_i,t})\mid 1\leq i\leq n, 1\leq t\leq k\}. 
\end{eqnarray*} 
And 
 the order of $\xi_{\coprod_{i=1}^n M_i, k}$ is the smallest common multiple of  
\begin{eqnarray*}
\{o(\xi_{M_i,t})\mid 1\leq i\leq n, 1\leq t\leq k\}. 
\end{eqnarray*}
\end{lemma}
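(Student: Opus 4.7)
The plan is to decompose the configuration space of $\coprod_{i=1}^n M_i$ according to how many of the $k$ points land in each summand, and then identify the restriction of $\xi_{\coprod_i M_i, k}$ to each piece. An ordered configuration of $k$ distinct points in $\coprod_i M_i$ records, besides the individual ordered configurations inside each $M_i$, a choice of a tuple $(t_1,\dots,t_n)$ with $\sum t_i=k$ and a partition of $\{1,\dots,k\}$ with block sizes $t_1,\dots,t_n$. Passing to the $\Sigma_k$-quotient, the partition data is killed, yielding a natural homeomorphism
\[
F\Bigl(\coprod_{i=1}^n M_i, k\Bigr)\Big/\Sigma_k \;\cong\; \coprod_{\substack{t_1+\cdots+t_n=k\\ t_i\ge 0}} \prod_{i=1}^n F(M_i, t_i)/\Sigma_{t_i},
\]
with the convention that $F(M_i,0)/\Sigma_0$ is a single point. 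Over each component the stabilizer of $\Sigma_k$ is $\Sigma_{t_1}\times\cdots\times\Sigma_{t_n}$ acting on $\mathbb{R}^k = \mathbb{R}^{t_1}\oplus\cdots\oplus\mathbb{R}^{t_n}$ block-diagonally, so the restriction of $\xi_{\coprod_i M_i, k}$ is canonically the external Whitney sum $\bigoplus_{i=1}^n \pi_i^*\xi_{M_i, t_i}$, where $\pi_i$ is the projection onto the $i$-th factor.

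Next, since the order (respectively stable order) of a bundle on a disjoint union is the l.c.m.\ of the orders (stable orders) of its restrictions, I reduce to showing that, for each tuple $(t_1,\dots,t_n)$,
\[
o\Bigl(\bigoplus_i \pi_i^*\xi_{M_i, t_i}\Bigr) \;=\; \operatorname{lcm}_{i}\, o(\xi_{M_i, t_i}),
\]
and similarly with $s$ in place of $o$. The upper bound is automatic: if $T$ is a common multiple of the $o(\xi_{M_i,t_i})$, then each $\pi_i^*\xi_{M_i,t_i}^{\oplus T}$ is trivial, hence so is the sum. For the lower bound I would restrict the $T$-fold sum along the inclusions
\[
\iota_i\colon F(M_i,t_i)/\Sigma_{t_i}\ \hookrightarrow\ \prod_{j=1}^n F(M_j,t_j)/\Sigma_{t_j}
\]
obtained by fixing a basepoint configuration in each of the other factors. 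The pullback under $\iota_i$ is $\xi_{M_i,t_i}^{\oplus T}\oplus \epsilon^{T\sum_{j\ne i}t_j}$, so if the $T$-fold Whitney sum on the product is trivial then $\xi_{M_i,t_i}^{\oplus T}$ is stably trivial; this gives the lower bound for $s$.

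Finally, as $(t_1,\dots,t_n)$ ranges over tuples with $\sum t_i=k$ and $n\ge 2$, the pair $(i,t_i)$ sweeps out all pairs $(i,t)$ with $1\le i\le n$ and $1\le t\le k$ (given $t$ and $i$, place $t_i=t$ and $t_j=k-t$ for any $j\ne i$, padding with zeros), so the l.c.m.s obtained match the ones in the statement. The main obstacle I anticipate is upgrading the stable-triviality conclusion of the lower bound to an honest triviality conclusion, which is needed for the statement about $o$. The expected remedy is to use the dimension-counting argument already used in Step~1 of the proof of Lemma~\ref{0321-l2}, or equivalently the observation recorded in Remark~\ref{remark2.2}: once $T$ is at least the stable order of $\xi_{M_i,t_i}$, the rank $t_iT$ is large enough compared with the dimension of $F(M_i,t_i)/\Sigma_{t_i}$ that a stably trivial bundle of that rank is already trivial. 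Granted this, the order identity follows from the stable-order identity, completing the proof.
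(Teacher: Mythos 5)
Your proof takes essentially the same route as the paper's: the same decomposition of $F(\coprod_i M_i,k)/\Sigma_k$ into products of unordered configuration spaces of the $M_i$, the same identification of the restricted bundle on each component as an external Whitney sum, and the same reduction to a least-common-multiple computation over components and factors. If anything you are more careful than the paper, which simply asserts that the order of an external product equals the lcm of the orders of the factors, whereas you correctly observe that the slice-restriction argument only yields stable triviality of $\xi_{M_i,t_i}^{\oplus T}$ and must be upgraded to honest triviality via the stable-range/$o=s$ considerations of Lemma~\ref{0321-l2} and Remark~\ref{remark2.2}.
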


\begin{proof}
 It follows from a direct computation that
\begin{eqnarray}\label{disj-e1}
F(\coprod_{i=1}^n M_i, k)/\Sigma_k= \coprod _{\scriptstyle\sum_{i=1}^n k_i=k,   \atop \scriptstyle  k_1,\cdots,k_n\geq 0 } \prod_{j=1}^n F(M_j,k_j)/\Sigma_{k_j}. 
\end{eqnarray}
Here the configuration space of zero point is defined to be the base point. 
Moreover, for any $k_1,\cdots,k_n\geq 0$ such that $\sum_{i=1}^n k_i=k$,  if we denote $\varphi_{k_1,\cdots,k_n}$ as the canonical inclusion of  $\prod_{i=1}^n F(M_i,k_i)/\Sigma_{k_i}$ into $F(\coprod_{i=1}^n M_i,k)/\Sigma_{k}$ given by (\ref{disj-e1}),  then 
\begin{eqnarray}\label{disj-e2}
\prod_{i=1}^n\xi_{M_i,k_i}\cong  \varphi_{k_1,\cdots,k_n}^*\xi_{\coprod_{i=1}^n M_i,k}.
\end{eqnarray}
It follows with the help of (\ref{disj-e2}) that
\begin{eqnarray}\label{0220-e1}
\xi_{\coprod_{i=1}^n M_i,k}&\cong& \coprod_{\scriptstyle\sum_{i=1}^n k_i=k,   \atop \scriptstyle  k_1,\cdots,k_n\geq 0 } \varphi_{k_1,\cdots,k_n}^*\xi_{\coprod_{i=1}^n M_i,k}\nonumber\\
&\cong& \coprod_{\scriptstyle\sum_{i=1}^n k_i=k,   \atop \scriptstyle  k_1,\cdots,k_n\geq 0 }\prod_{i=1}^n\xi_{M_i,k_i}.
\end{eqnarray}
Since the order (resp. the stable order)
 of a product of vector bundles equals to the smallest common multiple of the orders (resp. the stable orders)
  of each factor, and the order  (resp. the stable order)
   of a disjoint union of vector bundles equals to the smallest common multiple of the orders (resp. the stable orders)
    of each component,  Lemma~\ref{disj-l1}  follows from
(\ref{0220-e1}).
\end{proof}

The following lemma characterizes the order of  $\xi_{M,k}$ when $M$ is a connected manifold. 
\begin{lemma}\label{0309-l1}
Let $M$ be a connected $m$-dimensional manifold with $m\geq 2$, and $k\geq 2$. 

(a). Then the first Stiefel-Whitney class $w_1(\xi_{M,k})$ is non-zero.

(b). If there exists an integer $n$, which is a power of $2$, such that for any non-zero element $x$ in $H^1(F(M,k)/\Sigma_k;\mathbb{Z}_2)$, $x^n\neq 0$,  then the order of $\xi_{M,k}$ cannot divide $n$. 
\end{lemma}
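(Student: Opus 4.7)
The plan is, for (a), to analyze the monodromy of the $\Sigma_k$-principal cover $F(M,k) \longrightarrow F(M,k)/\Sigma_k$ and exhibit a loop in the base whose monodromy is an odd permutation. Since $\xi_{M,k}$ is the vector bundle associated to this cover via the permutation representation $\rho: \Sigma_k \longrightarrow O(k)$, and $\det \circ \rho$ is the sign homomorphism, such a loop witnesses the non-vanishing of $w_1(\xi_{M,k})$. The key input is the hypothesis $m \geq 2$: I would pick two distinct points $p_1, p_2 \in M$ joined by an arc, thicken this arc to an embedded open disc $D \subset M$, and choose $k-2$ further points $p_3, \ldots, p_k$ in $M \setminus \overline{D}$. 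Inside $D$ there is a standard half-twist path from $(p_1, p_2)$ to $(p_2, p_1)$, which together with the constant path on $p_3, \ldots, p_k$ gives a path in $F(M,k)$ whose image in $F(M,k)/\Sigma_k$ is a loop with lifted monodromy the transposition $(1\,2)$. Since $(1\,2)$ has sign $-1$, the sign character on $\pi_1(F(M,k)/\Sigma_k)$ is non-trivial, hence $w_1(\xi_{M,k}) \neq 0$.

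For (b), the strategy is a characteristic-class obstruction argument. Suppose for contradiction that the order of $\xi_{M,k}$ divides $n$. Then $\xi_{M,k}^{\oplus n}$ is trivial, so its total Stiefel-Whitney class equals $1$ in $H^*(F(M,k)/\Sigma_k; \mathbb{Z}_2)$. By multiplicativity of the total class under Whitney sums, one obtains $w(\xi_{M,k})^n = 1$. Since $n$ is a power of $2$, the Frobenius endomorphism of the mod-$2$ cohomology ring gives
\begin{eqnarray*}
w(\xi_{M,k})^n = 1 + w_1(\xi_{M,k})^n + w_2(\xi_{M,k})^n + \cdots ,
\end{eqnarray*}
so in particular $w_1(\xi_{M,k})^n = 0$. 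But by part (a), $w_1(\xi_{M,k})$ is a non-zero element of $H^1(F(M,k)/\Sigma_k; \mathbb{Z}_2)$, and the hypothesis of (b) then forces $w_1(\xi_{M,k})^n \neq 0$, a contradiction. Therefore $o(\xi_{M,k})$ does not divide $n$.

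The main obstacle lies in part (a): producing the loop with odd-permutation monodromy requires that a connected $m$-manifold with $m \geq 2$ leave enough room for two configuration points to braid around each other, which is why the dimension hypothesis is essential. Once (a) is in hand, part (b) reduces to a short mod-$2$ computation using only the Whitney-sum multiplicativity of the total Stiefel-Whitney class together with the additivity of the $n$-th power map on $H^*(-;\mathbb{Z}_2)$ when $n$ is a power of $2$.
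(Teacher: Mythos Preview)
Your proposal is correct and follows essentially the same approach as the paper. For (a), the paper observes abstractly that $F(M,k)$ is connected (using $m\geq 2$), so the monodromy $\pi_1(F(M,k)/\Sigma_k)\to\Sigma_k$ of the regular cover is surjective, whence the sign character is non-trivial and $w_1(\xi_{M,k})\neq 0$; your explicit half-twist loop realizes the transposition $(1\,2)$ directly and is simply a hands-on version of the same argument. For (b), both proofs use the identical computation: $w(\xi^{\oplus n})=w(\xi)^n=1+\sum_i w_i(\xi)^n$ since $n$ is a $2$-power, so $w_1(\xi)^n\neq 0$ forces $\xi^{\oplus n}$ to be non-trivial.
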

\begin{proof}
We first prove (a). We notice that  $F(M,k)$ is connected and the covering map from $F(M,k)$ to $F(M,k)/\Sigma_k$ induces an epimorphism 
\begin{eqnarray*}
h: \pi_1(F(M,k)/\Sigma_k)\longrightarrow \Sigma_k. 
\end{eqnarray*}
Let $r: \Sigma_k\longrightarrow O(k)$ be the regular representation of $\Sigma_k$ given by permuting the coordinates of $\mathbb{R}^k$.   Let $\delta: O(k)\longrightarrow \{\pm 1\}$ be the sign representation of $O(k)$.    Since  $h$ is surjective and 
$\delta\circ r$ is non-trivial,  the map $\delta\circ r\circ h$ is non-trivial.   Moreover, it is direct to verify that  there is a bijection between $\text{Hom}(\pi_1(F(M,k)/\Sigma_k),\mathbb{Z}_2)$  and $\text{Vect}_{\mathbb{R}}^1(F(M,k)/\Sigma_k)$, and this bijection sends $\delta\circ r\circ h$ to the determinant line bundle of $\xi_{M,k}$.   Consequently, the determinant line bundle of $\xi_{M,k}$ is non-trivial.   Therefore, $\xi_{M,k}$ is non-orientable and (a) follows.  

Now we turn to prove (b).  It follows from (a) and the conditions in (b) that
\begin{eqnarray*}
(w_1(\xi_{M,k}))^n\neq 0. 
\end{eqnarray*}
Since $n$ is a power of $2$,
\begin{eqnarray*}
w(\xi_{M,k}^{\oplus n})&=&(1+\sum_{i=1}^{k-1} w_i(\xi_{M,k}))^n\\
&\equiv& 1+ \sum_{i=1}^{k-1} (w_i(\xi_{M,k}))^n \text{ \ \ \ (mod }2)\\ 
&\neq& 0. 
\end{eqnarray*}
Therefore,  $\xi_{M,k}^{\oplus n}$ is not  trivial and (b) follows. 
\end{proof}


\section{The $p$-power of the stable order of the canonical vector bundle over configuration spaces}\label{2.4}

we give some lemmas on the $p$-power of the stable order of the canonical vector bundle over configuration spaces.
\smallskip

For  a finite $CW$-complex $M$ and a prime $p$, we denote $s_p(\xi_{M,k})$ to be the largest $p$-power $p^n$ that can divide   $s(\xi_{M,k})$. We call $s_p(\xi_{M,k})$ the $p$-power of $s(\xi_{M,k})$.   
\begin{lemma}\label{bd-c-p1}
Let $M$ be a finite $CW$-complex.  Then for any prime $p$ and $k\geq p$, 
\begin{eqnarray*}
s_p(\xi_{M,k})\leq s_p(\xi_{M,p}).
\end{eqnarray*}
\end{lemma}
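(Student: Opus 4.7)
The plan is to induct on $k \geq p$, with the trivial base case $k = p$. For the inductive step, fix $k > p$ and consider the subgroup $H = \Sigma_p \times \Sigma_{k-p} \leq \Sigma_k$, embedded as permutations of the first $p$ and the last $k-p$ letters. The associated covering $q\colon F(M,k)/H \to F(M,k)/\Sigma_k$ has degree $\binom{k}{p}$. Under restriction to $H$, the permutation $\Sigma_k$-representation $\mathbb{R}^k$ splits as $\mathbb{R}^p \oplus \mathbb{R}^{k-p}$, and passing to associated bundles yields
\[
q^* \xi_{M,k} \cong \pi_1^* \xi_{M,p} \oplus \pi_2^* \xi_{M,k-p},
\]
where $\pi_1, \pi_2$ are the natural projections onto $F(M,p)/\Sigma_p$ and $F(M,k-p)/\Sigma_{k-p}$ respectively.

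Set $N = s_p(\xi_{M,p})$. By the inductive hypothesis when $k-p \geq p$, or by the elementary observation that $s_p(\xi_{M,j}) = 1$ for $j < p$ (since the Sylow $p$-subgroup of $\Sigma_j$ is then trivial, making the classifying map of $\xi_{M,j}$ null-homotopic after $p$-localization), one has $s_p(\pi_2^* \xi_{M,k-p}) \leq N$. Consequently $(q^* \xi_{M,k})^{\oplus N}$ is stably trivial after $p$-localization. A standard transfer argument --- the Becker--Gottlieb transfer of $q$ composes with $q^*$ to give multiplication by $\binom{k}{p}$ on reduced $KO$-theory --- then yields $s_p(\xi_{M,k}) \leq p^{v_p(\binom{k}{p})} \cdot s_p(\xi_{M,p})$. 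By Kummer's theorem, $v_p(\binom{k}{p}) = 0$ precisely when the $p^1$-digit of $k$ in base $p$ is nonzero, and in that case the desired bound $s_p(\xi_{M,k}) \leq s_p(\xi_{M,p})$ is immediate.

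The main obstacle is the case $v_p(\binom{k}{p}) > 0$, in which $\Sigma_p \times \Sigma_{k-p}$ is too small to contain a Sylow $p$-subgroup of $\Sigma_k$. I would handle this case by replacing $H$ with the larger subgroup $\prod_{i \geq 0} \Sigma_{p^i}^{a_i} \leq \Sigma_k$, where $k = \sum_{i \geq 0} a_i p^i$ is the base-$p$ expansion of $k$; a Kummer-type computation shows that this subgroup has prime-to-$p$ index in $\Sigma_k$, and the pullback bundle decomposes as a Whitney sum of pullbacks of $\xi_{M, p^i}$ over factor spaces $F(M, p^i)/\Sigma_{p^i}$. This reduces the whole lemma to the prime-power case $k = p^i$, which I would treat by an auxiliary induction on $i$ using the wreath-product subgroup $\Sigma_p \wr \Sigma_{p^{i-1}} \leq \Sigma_{p^i}$ (also of prime-to-$p$ index). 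The technical heart of this sub-induction is the analysis of the pullback bundle under the wreath-product cover, where the restricted permutation representation is an induced representation rather than a direct sum; controlling how $\xi_{M, p^{i-1}}$ propagates through the resulting transfer-then-pushforward operation is the step I expect to be the most delicate.
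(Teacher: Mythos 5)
Your reduction steps are correct and cleanly done: the Whitney-sum decomposition of $q^*\xi_{M,k}$ over $F(M,k)/(\Sigma_p\times\Sigma_{k-p})$, the transfer identity $q_!q^*=\binom{k}{p}$, the observation that $s_p(\xi_{M,j})=1$ for $j<p$ (which follows most directly from $s(\xi_{M,j})\mid j!$ via the transfer of the $\Sigma_j$-cover itself), and the computation $v_p\bigl(\binom{k}{p}\bigr)=v_p(\lfloor k/p\rfloor)$ are all right. Likewise the Young subgroup $\prod_i\Sigma_{p^i}^{a_i}$ does have prime-to-$p$ index, so you have correctly reduced the lemma to the prime-power case $s_p(\xi_{M,p^i})\leq s_p(\xi_{M,p})$. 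For comparison, the paper does not argue at all: it cites this inequality to \cite[p.~105]{bundle1989}, so your attempt at a self-contained proof is more ambitious than what the paper records.

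However, the prime-power case is where the entire difficulty of the lemma lives, and your sketch of it has a genuine gap that you yourself flag but do not resolve. Under the cover $F(M,p^i)/(\Sigma_p\wr\Sigma_{p^{i-1}})\to F(M,p^i)/\Sigma_{p^i}$, the pullback of $\xi_{M,p^i}$ is the pushforward $\pi_!\zeta$ of the block bundle $\zeta$ (pulled back from $\xi_{M,p}$) along the $p^{i-1}$-fold ``choose a distinguished block'' covering $\pi$. The problem is that $\pi_!$ does not preserve stable triviality: by the projection formula, $\pi_!([\zeta]-p)=([\pi_!\zeta]-p^i)-p([\lambda]-p^{i-1})$, where $\lambda=\pi_!\epsilon^1$ is the permutation bundle of the block covering. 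So even granting $N([\zeta]-p)=0$ in $\widetilde{KO}_{(p)}$, you are left needing to bound the $p$-primary order of $[\lambda]-p^{i-1}$. But $\lambda$ is not one of the bundles $\xi_{M,j}$ --- it is pulled back from the canonical bundle over a configuration space of $p^{i-1}$ points of $F(M,p)/\Sigma_p$ (equivalently, of the symmetric product $\mathrm{Sym}^pM$) --- so your induction does not close, and the crude bound $s(\lambda)\mid (p^{i-1})!$ from the $\Sigma_{p^{i-1}}$-cover is far too weak $p$-locally. This is not a routine bookkeeping issue: for $M=\mathbb{R}^m$ the statement $s_p(\xi_{\mathbb{R}^m,p^i})=s_p(\xi_{\mathbb{R}^m,p})$ is essentially the content of the Cohen--Cohen--Kuhn--Neisendorfer theorem beyond Yang's detection result, and it is not proved by a transfer argument of this kind. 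As it stands your proposal establishes the lemma only when $v_p\bigl(\binom{k}{p}\bigr)=0$ (equivalently, when the $p^1$-digit of $k$ is nonzero), and already fails to cover $k=p^2$.
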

\begin{proof}
The proof  follows from  \cite[p. 105]{bundle1989}. 
\end{proof}

The following lemma is a straight-forward generalization of \cite[Lemma~2.1]{bundle1989}. We give a proof here since the proof of \cite[Lemma~2.1]{bundle1989} is omitted in \cite{bundle1989}. 

\begin{lemma}\label{bd-m-p1}
Let $M$ be a finite $CW$-complex. Suppose $M$ is non-compact. Then for any prime $p$ and $k\geq  p$, 
\begin{eqnarray*}
s_p(\xi_{M,k})=s_p(\xi_{M,p}).
\end{eqnarray*}
\end{lemma}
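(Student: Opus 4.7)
The inequality $s_p(\xi_{M,k}) \leq s_p(\xi_{M,p})$ is already handed to us by Lemma~\ref{bd-c-p1}, so the real task is to establish the reverse inequality $s_p(\xi_{M,p}) \leq s_p(\xi_{M,k})$. My plan is to deduce this (in fact in the stronger form $s(\xi_{M,p}) \mid s(\xi_{M,k})$) by combining the disjoint-union formula in Lemma~\ref{disj-l1} with the monotonicity in Lemma~\ref{comparison}, using non-compactness to feed the latter the right embedding.

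The key geometric input I would use is an injective CW-map
$$\iota : M \sqcup \{\mathrm{pt}\} \hookrightarrow M.$$
For a non-compact manifold this is constructed by fixing a point $q\in M$, using an isotopy along an end of $M$ to push $M$ homeomorphically onto a proper subset disjoint from $q$, and then sending the extra copy of the point to $q$. Applying Lemma~\ref{comparison} to $\iota$ immediately gives
$$s(\xi_{M\sqcup\{\mathrm{pt}\},k}) \mid s(\xi_{M,k}).$$

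Next, I would feed $M_1 = M$, $M_2 = \{\mathrm{pt}\}$ and $n=2$ into Lemma~\ref{disj-l1}. Because $F(\{\mathrm{pt}\},t)=\emptyset$ for $t\geq 2$ and $\xi_{\{\mathrm{pt}\},1}$ is just the trivial line bundle over a point (so its stable order is $1$), the only nontrivial contributions to the least common multiple come from the factors $s(\xi_{M,t})$, giving
$$s(\xi_{M\sqcup\{\mathrm{pt}\},k}) \;=\; \mathrm{lcm}\{s(\xi_{M,t}) : 1\leq t\leq k\}.$$
Since $p\leq k$, the right-hand side is divisible by $s(\xi_{M,p})$, so combining with the previous divisibility yields $s(\xi_{M,p}) \mid s(\xi_{M,k})$, hence $s_p(\xi_{M,p})\leq s_p(\xi_{M,k})$. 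Together with Lemma~\ref{bd-c-p1}, this proves equality.

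The step I expect to be the main obstacle is the construction of $\iota$: its validity hinges on what ``non-compact'' is allowed to mean in the class of CW-complexes under consideration, and in particular on ruling out pathologies that prevent one from freeing up even a single point of $M$ via a self-embedding. For non-compact manifolds — which are the setting intended by the author in view of the applications and of the conventions announced in the introduction — this is a standard end/isotopy argument; in greater generality one might have to invoke a collar neighborhood or an ambient isotopy that translates along a non-compact end of $M$.
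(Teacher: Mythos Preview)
Your proof is correct and morally parallel to the paper's, but the packaging differs. The paper works directly at the level of configuration spaces: it chooses $k-p$ distinct points $a_1,\ldots,a_{k-p}\in M$, uses non-compactness to produce an embedding $\varphi:M\hookrightarrow M\setminus\{a_1,\ldots,a_{k-p}\}$, and then defines a $\Sigma_p$-equivariant map $\Phi:F(M,p)\to F(M,k)$ by $(x_1,\ldots,x_p)\mapsto(\varphi(x_1),\ldots,\varphi(x_p),a_1,\ldots,a_{k-p})$. This exhibits $\xi_{M,p}\oplus\epsilon^{k-p}$ as a pull-back of $\xi_{M,k}$, giving $s_p(\xi_{M,p})\le s_p(\xi_{M,k})$ immediately.

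Your route instead builds a single embedding $M\sqcup\{\mathrm{pt}\}\hookrightarrow M$ (which needs only one point freed rather than $k-p$), and then lets Lemma~\ref{comparison} and Lemma~\ref{disj-l1} do the bookkeeping. This is pleasant because it recycles existing lemmas rather than writing out a new pull-back diagram, and as a bonus it yields the sharper divisibility $s(\xi_{M,p})\mid s(\xi_{M,k})$ of the full stable orders, not just their $p$-parts. The paper's argument, by contrast, is more self-contained (no appeal to the disjoint-union formula) and makes the isomorphism $\xi_{M,p}\oplus\epsilon^{k-p}\cong(\Phi/\Sigma_p)^*\xi_{M,k}$ explicit. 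Both rest on exactly the geometric input you flagged as the crux: a self-embedding of $M$ missing a prescribed finite set, which is unproblematic for the non-compact manifolds the paper actually applies this to.
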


\begin{proof}
Without loss of generality, we assume $k>p$. 
Since $M$ is a non-compact $CW$-complex, there exist distinct points $a_1,a_2,\cdots,a_{k-p}\in M$ and an embedding $\varphi$ 
from
$M$ into $M\setminus \{a_1,a_2,\cdots,a_{k-p}\}
$.
Hence there is a $\Sigma_p$-equivariant embedding $\Phi$ from $F(M,p)$ into $F(M,k)$ sending $(x_1,$ $\cdots,$ $x_p)$ to $(\varphi(x_1),$ $\cdots,$ $\varphi(x_p),$ $a_1,$ $\cdots,$ $a_{k-p})$.  As a consequence, there is a pull-back diagram of vector bundles
\begin{eqnarray*}
\xymatrix{
\mathbb{R}^p \oplus \mathbb{R}^{k-p}\ar[d]\ar@{=}[rrr] &&&\mathbb{R}^k\ar[d]\\
(F(M,p)\times_{\Sigma_p}\mathbb{R}^p) \oplus \mathbb{R}^{k-p}\ar[rrr]^{\text{\ \ \ \ }(\Phi\times_{\Sigma_p}{\text{Id}_{\mathbb{R}^p}})\times \text{Id}_{\mathbb{R}^{k-p}}} \ar[d]&&&F(M,k)\times_{\Sigma_k}\mathbb{R}^k\ar[d]\\
F(M,p)/\Sigma_p\ar[rrr] ^{ \Phi/\Sigma_p\times (a_1,\cdots,a_{k-p}) 
} &&& F(M,k)/\Sigma_k
}
\end{eqnarray*}
such that 
\begin{eqnarray*}
\xi_{M,p}\oplus \epsilon^{k-p}\cong (\Phi/\Sigma_p\times (a_1,\cdots,a_{k-p}) 
)^*\xi_{M,k}.
\end{eqnarray*}
It follows that 
\begin{eqnarray*}
s_p(\xi_{M,k})\geq s_p(\xi_{M,p}).
\end{eqnarray*}
 With the help of Lemma~\ref{bd-c-p1}, we obtain Lemma~\ref{bd-m-p1}.  
\end{proof}

The following corollary is a consequence of Lemma~\ref{bd-c-p1}  and \cite[Theorem~7.4]{adams}. 
\begin{corollary}\label{01192016-e11} 
The order as well as the stable order of $\xi_{S^m,2}$ is $2^{\rho(m)}$.  Moreover, if $k> 2$, then $s_2(\xi_{S^m,k})\leq 2^{\rho(m)}$.
\end{corollary}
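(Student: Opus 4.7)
The plan is to reduce $\xi_{S^m,2}$ to a standard bundle over $\mathbb{RP}^m$, apply Adams' computation of $\widetilde{KO}(\mathbb{RP}^m)$, and then invoke the two lemmas already established in the paper.

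First I would establish a $\Sigma_2$-equivariant homotopy equivalence $F(S^m,2) \simeq S^m$, given by the obvious deformation retraction onto the antipodal subset $\{(x,-x) : x \in S^m\}$, with $\Sigma_2$ acting on the target $S^m$ by the antipodal involution. Passing to orbits yields $F(S^m,2)/\Sigma_2 \simeq \mathbb{RP}^m$. Decomposing the permutation representation of $\Sigma_2$ on $\mathbb{R}^2$ into the trivial summand (along the diagonal) and the sign summand (along the anti-diagonal) splits the bundle as $\xi_{S^m,2} \cong \epsilon^1 \oplus \gamma$. Tracking the sign factor through the equivariant retraction identifies $\gamma$ with $S^m \times_{\mathbb{Z}_2} \mathbb{R}_{-}$, i.e.\ with the tautological (M\"obius) line bundle $\gamma_1$ over $\mathbb{RP}^m$.

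Having made this identification, I would invoke \cite[Theorem~7.4]{adams}, which asserts that $\widetilde{KO}(\mathbb{RP}^m) \cong \mathbb{Z}/2^{\rho(m)}$, generated by $[\gamma_1]-1$. This immediately gives $s(\gamma_1) = 2^{\rho(m)}$, and since adding a trivial summand does not change the stable order, $s(\xi_{S^m,2}) = 2^{\rho(m)}$. To upgrade from stable order to order, I would apply Lemma~\ref{0321-l2} with $M_0 = M = S^m$ (a closed connected manifold without boundary, of dimension $\geq 2$), yielding $o(\xi_{S^m,2}) = s(\xi_{S^m,2}) = 2^{\rho(m)}$. Finally, the ``moreover'' clause is a one-line application of Lemma~\ref{bd-c-p1} with $p=2$ and $M=S^m$: for all $k > 2$, $s_2(\xi_{S^m,k}) \leq s_2(\xi_{S^m,2}) = 2^{\rho(m)}$.

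The main substantive step is the explicit identification $\xi_{S^m,2} \cong \epsilon^1 \oplus \gamma_1$ over $\mathbb{RP}^m$; the only minor subtlety is verifying that the deformation retraction of $F(S^m,2)$ onto its antipodal subset is genuinely $\Sigma_2$-equivariant and that the sign-representation line factor pulls back to the M\"obius bundle (rather than to the trivial line bundle). Once this setup is in place, Adams' theorem together with Lemma~\ref{0321-l2} and Lemma~\ref{bd-c-p1} closes the argument with no further computation.
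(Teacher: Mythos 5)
Your argument is correct and is exactly the route the paper takes: the paper dispatches this corollary in one line as ``a consequence of Lemma~\ref{bd-c-p1} and \cite[Theorem~7.4]{adams},'' and your identification $F(S^m,2)/\Sigma_2\simeq\mathbb{RP}^m$ with $\xi_{S^m,2}\cong\epsilon^1\oplus\gamma_1$ is precisely the (suppressed) content of that citation, while Lemma~\ref{0321-l2} handles the passage from stable order to order. Your equivariant retraction onto the antipodal pairs and the sign-representation bookkeeping are both fine, so nothing is missing.
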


The following corollary is a consequence of  Lemma~\ref{bd-m-p1} and \cite[Theorem~7.4]{adams}. 
\begin{corollary}\label{period-l1}
The order as well as the stable order of $\xi_{\mathbb{R}^m,2}$ is $2^{\rho(m-1)}$.   Moreover, if $k> 2$, then $s_2(\xi_{\mathbb{R}^m,k})= 2^{\rho(m-1)}$.
\end{corollary}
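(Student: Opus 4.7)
The plan is to reduce the corollary to two established inputs: the already-proved Lemma~\ref{bd-m-p1} (which handles the comparison between $k=2$ and $k>2$) and Adams' Theorem 7.4 on $\tilde{KO}(\mathbb{RP}^{m-1})$ (which handles the actual numerical computation). The whole argument is then bookkeeping, parallel to what was done for $S^m$ in Corollary~\ref{01192016-e11}, but with the non-compactness of $\mathbb{R}^m$ allowing equality rather than just an inequality on the $2$-power.

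First I would compute the stable order of $\xi_{\mathbb{R}^m,2}$. The radial deformation $(x,y)\mapsto (x-y)/\|x-y\|$ exhibits a $\Sigma_2$-equivariant homotopy equivalence $F(\mathbb{R}^m,2)\simeq S^{m-1}$ with $\Sigma_2$ acting antipodally, so $F(\mathbb{R}^m,2)/\Sigma_2\simeq \mathbb{RP}^{m-1}$. Under this equivalence, the defining $\Sigma_2$-representation on $\mathbb{R}^2$ splits as the trivial line plus the sign representation, giving an isomorphism
\begin{eqnarray*}
\xi_{\mathbb{R}^m,2}\;\cong\;\gamma_1\oplus\epsilon^1
\end{eqnarray*}
where $\gamma_1$ is the tautological line bundle on $\mathbb{RP}^{m-1}$. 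By Adams' Theorem~7.4 (as cited in the paper), $[\gamma_1]-[\epsilon^1]$ generates $\tilde{KO}(\mathbb{RP}^{m-1})\cong\mathbb{Z}/2^{\rho(m-1)}$, so the stable order of $\gamma_1\oplus\epsilon^1$, and hence of $\xi_{\mathbb{R}^m,2}$, is exactly $2^{\rho(m-1)}$.

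Next I would pass from stable order to order. Since $\mathbb{R}^m$ is a connected manifold without boundary of dimension $m\geq 2$ and $M_0=M$ is non-empty, Lemma~\ref{0321-l2} applies directly to give $o(\xi_{\mathbb{R}^m,2})=s(\xi_{\mathbb{R}^m,2})=2^{\rho(m-1)}$. This completes the first assertion.

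Finally, for the case $k>2$ I would invoke Lemma~\ref{bd-m-p1} with $p=2$, whose hypothesis that $M$ is non-compact is satisfied by $M=\mathbb{R}^m$: this yields $s_2(\xi_{\mathbb{R}^m,k})=s_2(\xi_{\mathbb{R}^m,2})$, and by the first part the right-hand side equals $2^{\rho(m-1)}$. The main obstacle in the whole argument is really the one-line observation that $\xi_{\mathbb{R}^m,2}$ coincides up to a trivial summand with the tautological line bundle on $\mathbb{RP}^{m-1}$; once that identification is made explicit, every other step is a direct citation of a lemma already established above.
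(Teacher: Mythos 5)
Your argument is correct and is exactly the route the paper intends: the identification $F(\mathbb{R}^m,2)/\Sigma_2\simeq \mathbb{RP}^{m-1}$ with $\xi_{\mathbb{R}^m,2}\cong\gamma_1\oplus\epsilon^1$ feeds Adams' computation of $\widetilde{KO}(\mathbb{RP}^{m-1})$ to get the stable order, Lemma~\ref{0321-l2} upgrades this to the order, and Lemma~\ref{bd-m-p1} (with $p=2$, using non-compactness of $\mathbb{R}^m$) handles $k>2$. The paper states the corollary as an immediate consequence of Lemma~\ref{bd-m-p1} and Adams' theorem without writing out these details, so your proposal simply makes the same proof explicit.
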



\section{Cohomology of configuration spaces of spheres}\label{2.2}
We give some lemmas on the cohomology of configuration spaces of spheres. We suppose that $M$ is a manifold and $M_0$ is a submanifold of $M$ throughout this section. 
\smallskip

For a  
topological space $X$ with non-degenerate base-point $*$,  
we define the space 
\begin{eqnarray*}
C(M,M_0;X)=\coprod_{k\geq 1}F(M,k)\times _{\Sigma_k} X^k/\approx
\end{eqnarray*}
where $\approx$ is generated by 
\begin{eqnarray*}
(m_1,\cdots,m_k;x_1,\cdots,x_k)\approx (m_1,\cdots,m_{k-1};x_1,\cdots,x_{k-1})
\end{eqnarray*}
 if either $m_k\in M_0$ or $x_k=*$. 
 Such spaces occur as models for mapping spaces (cf.  \cite{mapping1,mapping2}). The space $C(M,M_0;X)$ is filtered by closed subspaces
\begin{eqnarray*}
C_k(M,M_0;X)=\coprod_{j=1}^kF(M,j)\times _{\Sigma_j} X^j/\approx
\end{eqnarray*}
with $C_0(M,M_0;X)$ defined to be the base-point and 
$C_1(M,M_0;X)$ the space $(M/M_0)\wedge X$. 
The inclusions of $C_{k-1}(M,M_0;X)$ into $C_k(M,M_0;X)$ are cofibrations \cite[Theorem~7.1]{mayg}. Their cofibres are denoted by $D_k(M,M_0;X)$, called the $k$-adic construction.  There is  a well-known Snaith splitting (for example, \cite[Proposition~2.4]{wu1})
\begin{eqnarray*}
\Sigma^{\infty}C(M,M_0;X)\simeq \Sigma^{\infty} \bigvee_{k=1}^\infty D_k(M,M_0;X). 
\end{eqnarray*}
Once $M_0$ is the empty set, the spaces $C(M, M_0; X)$, $C_k(M, M_0; X)$ and $D_k(M, M_0; X)$ will be simply denoted as $C(M;  X)$, $C_k(M; X)$ and $D_k(M; X)$ respectively.    

\smallskip

The following lemma gives the rational cohomology of configuration spaces of even-dimensional spheres.

\begin{lemma}\label{rational}\cite{fn,sev, oscar}
Let $d$ be a positive integer and $k\geq 3$. Then 
\begin{eqnarray*}
 H^i(F(S^{2d},k)/\Sigma_k;\mathbb{Q})= \left\{
\begin{aligned}
&\mathbb{Q},&  \text{\ \ \ if } i=0 \text{ or\ } 4d-1, \\
&0,&  \text{\ \ \  otherwise}. 
\end{aligned}
\right.
\end{eqnarray*} 
\end{lemma}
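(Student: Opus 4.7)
Since the $\Sigma_k$-action on $F(S^{2d}, k)$ is free, the transfer gives the rational isomorphism
\[
H^*(F(S^{2d},k)/\Sigma_k;\mathbb{Q}) \;\cong\; H^*(F(S^{2d},k);\mathbb{Q})^{\Sigma_k},
\]
so it suffices to compute the $\Sigma_k$-invariants of the rational cohomology of the ordered configuration space. For this I would use the Fadell--Neuwirth fibration
\[
F(\mathbb{R}^{2d},k-1) \longrightarrow F(S^{2d},k) \longrightarrow S^{2d}
\]
coming from projection onto a distinguished coordinate. Rationally, the fibre cohomology is the Arnol'd/F.\,Cohen exterior algebra generated by classes $A_{ij}$ of degree $2d-1$ for $1\leq i<j\leq k-1$, modulo the Arnol'd relations. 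Since $S^{2d}$ has rational cohomology only in degrees $0$ and $2d$, the Serre spectral sequence has only two rows, and its nontrivial differentials are controlled by the Euler class of $TS^{2d}$ acting on diagonal classes.

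Taking $\Sigma_k$-invariants, the expected outcome is that the only surviving positive-degree invariant is a single class in degree $(2d)+(2d-1) = 4d-1$, arising as the product of the fundamental class of the base $S^{2d}$ with a symmetric fibre class of degree $2d-1$. As a low-dimensional consistency check, at $d=1$ and $k=3$ the classical identification $F(S^2,3)\cong PSL_2(\mathbb{C})\simeq_\mathbb{Q} S^3$, together with the observation that $\Sigma_3$ embeds in $SO(3)$ preserving the orientation class, recovers $H^*(F(S^2,3)/\Sigma_3;\mathbb{Q})=H^*(S^3;\mathbb{Q})$ directly. An induction on $k$ via the forgetful map $F(S^{2d},k)\to F(S^{2d},k-1)$ with fibre $S^{2d}\setminus\{k-1\text{ points}\}\simeq \bigvee_{k-2} S^{2d-1}$ then propagates the computation to all $k\geq 3$.

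The main obstacle is the invariants bookkeeping inside the Arnol'd algebra: verifying that every positive-degree $\Sigma_k$-invariant combination of the generators $A_{ij}$ and of the base fundamental classes either vanishes modulo the Arnol'd and diagonal relations, or is a scalar multiple of the distinguished degree-$(4d-1)$ class. The references \cite{fn, sev, oscar} cited in the lemma presumably organize this computation more efficiently via Sullivan or Cohen--Taylor rational models for configuration spaces of manifolds; the plan above would in essence re-derive their result with heavier combinatorics.
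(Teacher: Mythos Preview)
The paper does not prove this lemma at all: it is stated with the citations \cite{fn,sev,oscar} and used as a black box. So there is no ``paper's own proof'' to compare against beyond those references.

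Your outline is a legitimate way to recover the result from first principles, and the architecture is sound: transfer to $\Sigma_k$-invariants, the Fadell--Neuwirth fibration over $S^{2d}$ with fibre $F(\mathbb{R}^{2d},k-1)$, and identification of the single surviving invariant in degree $4d-1$. Two small corrections: the Serre spectral sequence has two nontrivial \emph{columns} (at $p=0$ and $p=2d$), not two rows; and the Fadell--Neuwirth projection is only $\Sigma_{k-1}$-equivariant, so you must first compute $H^*(F(S^{2d},k);\mathbb{Q})$ in full and only afterwards take $\Sigma_k$-invariants, rather than take invariants fibrewise. You already anticipate that the invariants step is where the work lies. Concretely, one shows that the positive-degree $\Sigma_k$-invariants of the Arnol'd algebra for $F(\mathbb{R}^{2d},k)$ are one-dimensional, generated by $\sum_{i<j}A_{ij}$ in degree $2d-1$; then in the spectral sequence the base class in $E_2^{2d,0}$ is hit by a $d_{2d}$ (since $\chi(S^{2d})=2\neq 0$), and the surviving invariant sits in $E_\infty^{2d,2d-1}$, giving exactly the $\mathbb{Q}$ in degree $4d-1$.

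By contrast, the cited sources organize the computation differently: Sevryuk \cite{sev} treats the complex case via singularity theory, Napolitano \cite{fn} handles surfaces, and Randal-Williams \cite{oscar} uses scanning/nonabelian Poincar\'e duality to identify $C(S^{2d};-)$ with a section space and read off the rational type directly. Those routes bypass the Arnol'd-algebra combinatorics entirely; your approach is more elementary in its inputs but heavier in bookkeeping, which is a fair trade.
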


The following lemma is a consequence from \cite[Proposition~17 and Theorem~18]{salvatore}.  
\begin{lemma}\label{0207-l1}
Let $d$ be a positive integer and $p$ be an odd prime. Then
\begin{eqnarray}\label{tor-e1}
\text{Tor}_p( H^k(F(S^{2d},p)/\Sigma_p;\mathbb{Z}))= \left\{
\begin{aligned}
 &\mathbb{Z}_p, &  \text{ if } k=2s(p-1), 1\leq s\leq d-1, \\
&0,&   \text{otherwise. } 
\end{aligned}
\right.
\end{eqnarray}
\end{lemma}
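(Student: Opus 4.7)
The plan is to derive the $p$-torsion structure directly from Salvatore's Proposition~17 and Theorem~18, which describe the mod-$p$ cohomology of $F(S^{2d},p)/\Sigma_p$, and then compare with the rational cohomology of Lemma~\ref{rational} via the universal coefficient theorem to isolate the $p$-primary torsion.

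First I would recall (or extract from Salvatore's paper) the mod-$p$ Betti numbers of $F(S^{2d},p)/\Sigma_p$. The pattern expected is that $H^k(F(S^{2d},p)/\Sigma_p;\mathbb{F}_p)$ is one-dimensional in degrees $0$, $2s(p-1)$ and $2s(p-1)+1$ for $1\le s\le d-1$, plus top classes coming from the $(4d-1)$-dimensional rational class. The classes in degrees $2s(p-1)$ are the mod-$p$ reductions of Dyer--Lashof/Steenrod type classes coming from the extended power construction, while the classes in $2s(p-1)+1$ are their Bocksteins. These are exactly the ingredients supplied by Proposition~17 and Theorem~18 of \cite{salvatore}.

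Next I would combine this with the rational computation. By Lemma~\ref{rational}, the free part of $H^*(F(S^{2d},p)/\Sigma_p;\mathbb{Z})$ is $\mathbb{Z}$ in degrees $0$ and $4d-1$ and $0$ elsewhere, so in every other degree the integral cohomology is pure torsion. Applying the universal coefficient theorem
\begin{eqnarray*}
H^k(X;\mathbb{F}_p)\cong \bigl(H^k(X;\mathbb{Z})\otimes \mathbb{F}_p\bigr)\oplus \mathrm{Tor}\bigl(H^{k+1}(X;\mathbb{Z}),\mathbb{F}_p\bigr),
\end{eqnarray*}
the appearance of $\mathbb{F}_p$ in degree $2s(p-1)+1$ paired with $\mathbb{F}_p$ in degree $2s(p-1)$ forces, degree by degree, a single cyclic $p$-primary summand $\mathbb{Z}_p$ in $H^{2s(p-1)}(F(S^{2d},p)/\Sigma_p;\mathbb{Z})$ for each $1\le s\le d-1$, and no other $p$-torsion. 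The Bockstein relation between the paired classes rules out $\mathbb{Z}_{p^r}$ for $r\ge 2$.

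The main obstacle, and the only real content beyond bookkeeping, is making sure that Salvatore's description of the mod-$p$ cohomology produces exactly the paired classes in the claimed degrees and no spurious classes in other degrees, in particular that no mod-$p$ classes hide in degrees $\not\equiv 0,1 \pmod{2(p-1)}$. Once that matching is verified, the universal coefficient extraction is routine, and (\ref{tor-e1}) follows.
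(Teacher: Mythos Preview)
Your plan is the paper's own argument: read off the mod-$p$ homology of $F(S^{2d},p)/\Sigma_p$ as the weight-$p$ summand of Salvatore's description of $H_*(C(S^{2d};S^0);\mathbb{Z}_p)$, identify the Bockstein pairs, and convert to integral $p$-torsion in cohomology via the universal coefficient theorem together with the known rational type.

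One correction is needed in your bookkeeping. The Dyer--Lashof class $Q_{2s}\iota$ sits in homological degree $2s(p-1)$ and its homology Bockstein $\beta Q_{2s}\iota$ in degree $2s(p-1)-1$; dualizing, the nontrivial mod-$p$ \emph{cohomology} classes lie in degrees $2s(p-1)-1$ and $2s(p-1)$, not $2s(p-1)$ and $2s(p-1)+1$. With your own universal coefficient formula, a pair of mod-$p$ classes in degrees $(n-1,n)$ is accounted for by a single $\mathbb{Z}_p$ in $H^{n}(-;\mathbb{Z})$, so the corrected degrees give exactly the torsion in $H^{2s(p-1)}$ as claimed. As you wrote them, the torsion would instead land in $H^{2s(p-1)+1}$, which is inconsistent with the statement you are proving. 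Once this shift is fixed, your sketch matches the paper line for line.
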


\begin{proof}
 
Let $F$ denote the graded commutative algebra generated by a set of vectors.    Following the notations in \cite{salvatore},
\begin{eqnarray}\label{0204-e1}
H_*(C(S^{2d};S^{0});\mathbb{Z}_p)&\cong&F(\iota, Q)\oplus \iota^{p-1}[\iota,\iota]F(\iota^p,Q)\oplus\nonumber\\
&\text{\  }& \Sigma^{2d}[\iota,\iota]F(\iota,Q)\oplus \Sigma^{2d} F(\iota^p,Q)  
\end{eqnarray}
where $Q$ is the set of all admissible sequences of Dyer-Lashof operations on $\iota$ and $[\iota,\iota]$ except the identity. 
Let $\upsilon$ be the degree that corresponds to the number of particles (cf. \cite[Proposition~17 (3) and Theorem~18~(5)]{salvatore}). Then the homology  $H_*(F(S^{2d},p)/\Sigma_p;\mathbb{Z}_p)$ is isomorphic as a vector space to the subspace of  (\ref{0204-e1}) generated by all monomials of degree $p$.  With the helps of \cite[Proposition~17 (3) and Theorem~18~(5)]{salvatore} and that the Dyer-Lashof operations
\begin{eqnarray*}
Q_i: H_q(-;\mathbb{Z}_p)\longrightarrow H_{pq+i(p-1)}(-;\mathbb{Z}_p)
\end{eqnarray*}
is defined when $i$ and $q$ have the same parity (cf. \cite[p. 537 (b)]{salvatore}), we have
\begin{eqnarray}\label{deg-e1}
\{x\in F(\iota, Q)\mid \upsilon(x)=p\}&=&\mathbb{Z}_p\iota^p\oplus_{ s=1 }^{d-1}(\mathbb{Z}_p Q_{2s} \iota\oplus \mathbb{Z}_p\beta Q_{2s}\iota),\\ 
\label{deg-e2}
\{x\in \iota^{p-1}[\iota,\iota]F(\iota^p, Q)\mid \upsilon(x)=p\}&=&0,\\
\label{deg-e3}
\{x\in \Sigma^{2d}[\iota,\iota]F(\iota, Q)\mid \upsilon(x)=p\}&=&\mathbb{Z}_p\iota^{p-3}\Sigma^{2d}[\iota,\iota],\\
\label{deg-e4}
\{x\in \Sigma^{2d} F(\iota^p, Q)\mid \upsilon(x)=p\}&=&0.
\end{eqnarray}
It follows from  (\ref{0204-e1}) - (\ref{deg-e4}) that
\begin{eqnarray*}
H_*(F(S^{2d},p)/\Sigma_p;\mathbb{Z}_p)&=&\mathbb{Z}_p\iota^p\oplus_{s=1}^{d-1} (\mathbb{Z}_pQ_{2s}\iota\oplus \mathbb{Z}_p \beta Q_{2s}\iota)\\
&&
\oplus\mathbb{Z}_p\iota^{p-3}\Sigma^{2d}[\iota,\iota]. 
\end{eqnarray*} 
Here the dimensions of the generators are 
\begin{eqnarray*}
|\iota^p|&=&0,\\
|\iota^{p-3}\Sigma^{2d}[\iota,\iota]|&=&4d-1,\\
|Q_{2s} \iota|&=& 2s(p-1),\\
 |\beta Q_{2s} \iota|
&=& 2s(p-1)-1. 
\end{eqnarray*} 
Moreover, since $\beta^2=0$, we have
\begin{eqnarray*}
\beta (\iota^{p-3}\Sigma^{2d}[\iota,\iota])=0. 
\end{eqnarray*}
 By the computation of the  Bockstein Spectral Sequence and the Universal Coefficient Theorem for cohomology (cf. \cite[Proof of (3.2)]{swy} or \cite[p. 17]{swy-thesis}), we obtain (\ref{tor-e1}). 
 \end{proof}




\smallskip

\section{Proof of Theorem~\ref{main}}\label{sss4}

The main aim of this section is to prove Theorem~\ref{main}.  
In order to do this, we first study the order  of (\ref{01292016-e1}) when $M$ is a Euclidean space.    

\begin{theorem}\label{period-t1}
Let $k\geq 2$.  Then  the order of $\xi_{\mathbb{R}^m,k}$  is $a_{m,k}$. Moreover, if $k$ is nonprime and $M_0$ is a non-empty $CW$-subcomplex of $\mathbb{R}^m$,   then the order of  $\xi_{\mathbb{R}^m|M_0,k}$ is $a_{m,k}$.
\end{theorem}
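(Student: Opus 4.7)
The plan is to combine the known stable-order results of Cohen--Cohen--Kuhn--Neisendorfer and Yang with Lemma~\ref{0321-l2} and Corollary~\ref{period-l1} in the absolute case, and to adapt the embedding argument in the proof of Lemma~\ref{bd-m-p1} to the relative case.

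For $\xi_{\mathbb{R}^m,k}$, I would first invoke Lemma~\ref{0321-l2} with $M_0=M=\mathbb{R}^m$ to reduce the order computation to that of the stable order. By \cite[Theorem~1.1]{bundle1983}, recalled in the introduction, the stable order is $a_{m,k}$ when $m\not\equiv 0\pmod 4$ and is either $a_{m,k}$ or $2a_{m,k}$ when $4\mid m$. Corollary~\ref{period-l1} then pins down the $2$-primary part of $s(\xi_{\mathbb{R}^m,k})$ as exactly $2^{\rho(m-1)}$, which is already the $2$-primary part of $a_{m,k}$; this rules out the value $2a_{m,k}$, so $s(\xi_{\mathbb{R}^m,k})=a_{m,k}$, and hence $o(\xi_{\mathbb{R}^m,k})=a_{m,k}$ for every $m\geq 2$.

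For the relative assertion with $k$ nonprime, the upper bound $o(\xi_{\mathbb{R}^m\mid M_0,k})\mid a_{m,k}$ is immediate from Lemma~\ref{comparison} applied to $(\mathbb{R}^m,M_0)\hookrightarrow(\mathbb{R}^m,\mathbb{R}^m)$. For the matching lower bound I would run a relative version of the embedding argument from the proof of Lemma~\ref{bd-m-p1}. Fix a prime $p\leq k$; the nonprime hypothesis forces $p<k$, hence $k-p\geq 1$. Pick a witness point $a_1\in M_0$ (possible because $M_0\neq\emptyset$), additional distinct points $a_2,\dots,a_{k-p}\in\mathbb{R}^m\setminus\{a_1\}$, and, using the non-compactness of $\mathbb{R}^m$, an embedding $\varphi\colon\mathbb{R}^m\hookrightarrow\mathbb{R}^m\setminus\{a_1,\dots,a_{k-p}\}$. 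The $\Sigma_p$-equivariant map
\[
\Phi\colon F(\mathbb{R}^m,p)\longrightarrow F(\mathbb{R}^m\mid M_0,k),\qquad (x_1,\dots,x_p)\longmapsto(\varphi(x_1),\dots,\varphi(x_p),a_1,\dots,a_{k-p})
\]
lands in the relative configuration space because $a_1\in M_0$. Since under the inclusion $\Sigma_p\subseteq\Sigma_k$ the fibre $\mathbb{R}^k$ splits as $\mathbb{R}^p\oplus\mathbb{R}^{k-p}$ with $\Sigma_p$ acting trivially on the second summand, pulling $\xi_{\mathbb{R}^m\mid M_0,k}$ back along the composite $F(\mathbb{R}^m,p)/\Sigma_p\to F(\mathbb{R}^m\mid M_0,k)/\Sigma_p\to F(\mathbb{R}^m\mid M_0,k)/\Sigma_k$ yields $\xi_{\mathbb{R}^m,p}\oplus\epsilon^{k-p}$. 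Consequently $s_p(\xi_{\mathbb{R}^m,p})\mid s_p(\xi_{\mathbb{R}^m\mid M_0,k})$ for every prime $p\leq k$. Feeding in Yang's value $s_p(\xi_{\mathbb{R}^m,p})=p^{[(m-1)/2]}$ for odd $p$, together with Corollary~\ref{period-l1} for $p=2$, gives $a_{m,k}\mid s(\xi_{\mathbb{R}^m\mid M_0,k})$.

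Chaining the divisibilities $a_{m,k}\mid s(\xi_{\mathbb{R}^m\mid M_0,k})\mid o(\xi_{\mathbb{R}^m\mid M_0,k})\mid a_{m,k}$ forces equality throughout, finishing the proof. The one non-routine step is the identification $(\Phi/\Sigma_p)^*\xi_{\mathbb{R}^m\mid M_0,k}\cong\xi_{\mathbb{R}^m,p}\oplus\epsilon^{k-p}$: here one has to track the $\Sigma_p\subseteq\Sigma_k$ action on the permutation representation $\mathbb{R}^k$ through two successive quotient maps rather than one. The nonprime hypothesis on $k$ enters precisely to guarantee $k-p\geq 1$ for every prime $p$ appearing in $a_{m,k}$, so that a witness point in $M_0$ can always be placed in the fixed tail of $\Phi$; dropping it would require a substantially different embedding strategy, and that is the main obstacle to removing the hypothesis.
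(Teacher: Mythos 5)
Your argument for the absolute statement is the paper's own: reduce to the stable order via Lemma~\ref{0321-l2}, then combine \cite[Theorem~1.1]{bundle1983} with Corollary~\ref{period-l1} to pin the $2$-primary part at $2^{\rho(m-1)}$ and exclude $2a_{m,k}$. For the relative statement your route is genuinely different and also correct. The paper sandwiches $(\mathbb{R}^m,M_0)$ between $(\mathbb{R}^m,*)$ and $(\mathbb{R}^m,\mathbb{R}^m)$ via Lemma~\ref{comparison} and then computes the lower end in one stroke using Lemma~\ref{rel-p2}, which identifies $\xi_{\mathbb{R}^m|*,k}$ with $\xi_{\mathbb{R}^m\setminus\{*\},k-1}\oplus\epsilon^1$ and hence has order $a_{m,k-1}=a_{m,k}$ (the nonprime hypothesis entering exactly here). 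You instead work prime by prime, relativizing the embedding argument of Lemma~\ref{bd-m-p1}: for each prime $p\leq k$ the nonprimality of $k$ gives $k-p\geq 1$, so a witness point of $M_0$ can be parked in the fixed tail, producing $\xi_{\mathbb{R}^m,p}\oplus\epsilon^{k-p}$ as a pullback of $\xi_{\mathbb{R}^m|M_0,k}$ and hence the lower bound $a_{m,k}\mid s(\xi_{\mathbb{R}^m|M_0,k})$ after feeding in Yang's computation and Corollary~\ref{period-l1}; your divisibility chain then closes without ever invoking Lemma~\ref{0321-l2} in the relative case. The paper's reduction is shorter and reuses a structural identity of relative configuration spaces that serves it again for hypersurfaces and spheres; your version is more self-contained at the level of bundles, makes the role of the nonprime hypothesis completely transparent (it is exactly the existence of a spare slot for a point of $M_0$), and correctly flags the pullback identification $(\Phi/\Sigma_p)^*\xi_{\mathbb{R}^m|M_0,k}\cong\xi_{\mathbb{R}^m,p}\oplus\epsilon^{k-p}$ as the one step needing care, which is the same diagram already verified in the proof of Lemma~\ref{bd-m-p1}.
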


\begin{proof}
It follows from \cite[Theorem~1.1]{bundle1983} and Corollary~\ref{period-l1}  that 
$s(\xi_{\mathbb{R}^m,k})=a_{m,k}$. 
And  with the help of Lemma~\ref{0321-l2}, the first assertion follows. 
 
Suppose in addition that $k$ is nonprime and $M_0$ is a non-empty $CW$-subcomplex of $\mathbb{R}^m$. Let $*$ be a point of $M_0$. Then we have the embeddings
\begin{eqnarray}\label{0203-e1}
(\mathbb{R}^m,*)\longrightarrow (\mathbb{R}^m,M_0)\longrightarrow \mathbb(\mathbb{R}^m,\mathbb{R}^m).
\end{eqnarray}
With the help of Lemma~\ref{rel-p2}, we obtain
\begin{eqnarray}
o(\xi_{\mathbb{R}^m|*,k})&=& o(\xi_{\mathbb{R}^m\setminus  \{*\},k-1})\nonumber\\
&=&a_{m,k-1}.\label{0203-e3}  
\end{eqnarray}
Since $k$ is nonprime, $a_{m,k-1}=a_{m,k}$. Therefore, the second assertion follows from the first assertion and (\ref{0203-e3}).  
\end{proof}
\begin{remark}
Theorem~\ref{period-t1} generalizes the conjecture 
$s(\xi_{\mathbb{R}^m,k})=a_{m,k}$
which was made by F.R. Cohen, M.E. Mahowald and S.W. Yang (cf. \cite{bundle1983, swy}). 
\end{remark}

As a consequence of Theorem~\ref{period-t1}, we give the order of (\ref{01292016-e1}) when $M$ is an odd dimensional hypersurface in a Euclidean space. 

\begin{corollary}\label{bd-hypers}
Let $M$ be a hypersurface in $\mathbb{R}^{m+1}$, $m$ odd and $k\geq 2$.  Then
the order  of      $\xi_{M,k}$   is either $a_{m,k}$ or $2^{\rho(m)-\rho(m-1)}a_{m,k}$.  
Moreover, if $k$ is nonprime  
and $M_0$  is a non-empty $CW$-subcomplex of $M$, then  
  the order of $\xi_{M|M_0,k}$ is either $a_{m,k}$ or $2^{\rho(m)-\rho(m-1)}a_{m,k}$.  
 \end{corollary}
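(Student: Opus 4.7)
The plan is to sandwich $o(\xi_{M,k})$ between $a_{m,k}$ and $2^{\rho(m)-\rho(m-1)}a_{m,k}$ by comparison with the ambient Euclidean space $\mathbb{R}^{m+1}$ above and with a Euclidean chart $\mathbb{R}^m$ below, and then to apply Theorem~\ref{period-t1} to evaluate both endpoints. The essential arithmetic observation that makes this work is that, because $m$ is odd, $[m/2]=[(m-1)/2]$, so the odd-prime parts of $a_{m+1,k}$ and $a_{m,k}$ coincide and the only discrepancy is in the power of $2$, namely $a_{m+1,k}=2^{\rho(m)-\rho(m-1)}a_{m,k}$, where $\rho(m)-\rho(m-1)\in\{0,1\}$.

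First I would establish the upper bound. The inclusion $M\hookrightarrow \mathbb{R}^{m+1}$ is an injective map of CW-complexes, so Lemma~\ref{comparison} gives $o(\xi_{M,k})\mid o(\xi_{\mathbb{R}^{m+1},k})$; by Theorem~\ref{period-t1} the right-hand side equals $a_{m+1,k}=2^{\rho(m)-\rho(m-1)}a_{m,k}$. Next, for the lower bound, since $M$ is an $m$-dimensional manifold (with $m\geq 2$), I can choose an open coordinate chart embedding $\mathbb{R}^m\hookrightarrow M$; Lemma~\ref{comparison} then yields $a_{m,k}=o(\xi_{\mathbb{R}^m,k})\mid o(\xi_{M,k})$. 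Combining the two divisibility chains, the integer $o(\xi_{M,k})/a_{m,k}$ divides $2^{\rho(m)-\rho(m-1)}\in\{1,2\}$, which is exactly the first assertion.

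For the relative assertion, assume $k$ is nonprime and $\emptyset\neq M_0\subseteq M$. Pick a point $*\in M_0$ and consider the chain of injective maps of relative CW-pairs $(M,\{*\})\hookrightarrow (M,M_0)\hookrightarrow(M,M)$; Lemma~\ref{comparison} yields
\begin{equation*}
o(\xi_{M|\{*\},k})\ \Big|\ o(\xi_{M|M_0,k})\ \Big|\ o(\xi_{M,k}).
\end{equation*}
By Lemma~\ref{rel-p2}, $\xi_{M|\{*\},k}\cong \xi_{M\setminus\{*\},k-1}\oplus\epsilon^1$, so $o(\xi_{M|\{*\},k})=o(\xi_{M\setminus\{*\},k-1})$. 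Now $M\setminus\{*\}$ is itself an $m$-dimensional hypersurface in $\mathbb{R}^{m+1}$, so the first assertion (applied to $M\setminus\{*\}$ with parameter $k-1\geq 2$) gives $a_{m,k-1}\mid o(\xi_{M\setminus\{*\},k-1})\mid 2^{\rho(m)-\rho(m-1)}a_{m,k-1}$. Because $k$ is nonprime, every prime $p\leq k$ actually satisfies $p\leq k-1$, and hence $a_{m,k-1}=a_{m,k}$. Thus the chain above becomes $a_{m,k}\mid o(\xi_{M|M_0,k})\mid 2^{\rho(m)-\rho(m-1)}a_{m,k}$, which forces $o(\xi_{M|M_0,k})\in\{a_{m,k},\ 2^{\rho(m)-\rho(m-1)}a_{m,k}\}$.

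The main technical point to watch will be the application of the first assertion to the open manifold $M\setminus\{*\}$: one must be comfortable that it still satisfies the paper's standing conventions on hypersurfaces (it inherits an embedding into $\mathbb{R}^{m+1}$ and still admits a Euclidean chart), and that the injective maps $\mathbb{R}^m\hookrightarrow M\setminus\{*\}\hookrightarrow \mathbb{R}^{m+1}$ are admissible inputs to Lemma~\ref{comparison}. Once this is granted, every step is a direct application of the already-established tools, and no new computation is required.
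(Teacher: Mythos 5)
Your proposal is correct and follows essentially the same route as the paper: the divisibility sandwich $\mathbb{R}^m\hookrightarrow M\hookrightarrow\mathbb{R}^{m+1}$ via Lemma~\ref{comparison} and Theorem~\ref{period-t1} for the first assertion, and the chain $(M,\{*\})\to(M,M_0)\to(M,M)$ together with Lemma~\ref{rel-p2} and $a_{m,k-1}=a_{m,k}$ for nonprime $k$ for the second. The explicit identity $a_{m+1,k}=2^{\rho(m)-\rho(m-1)}a_{m,k}$ for odd $m$ is exactly the arithmetic the paper leaves implicit, so no further comment is needed.
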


\begin{proof}
Suppose $M$ is a hypersurface in $\mathbb{R}^{m+1}$ with $m$ odd. We consider the injective maps 
\begin{eqnarray*}
\mathbb{R}^{m}\overset{i}{\longrightarrow} M\overset{j}{\longrightarrow} \mathbb{R}^{m+1}. 
\end{eqnarray*}
By Lemma~\ref{comparison}, 
\begin{eqnarray}\label{bd-eq1}
o (\xi_{\mathbb{R}^m,k})\mid 
 o (M,k)\mid o (\xi_{\mathbb{R}^{m+1},k}). 
\end{eqnarray}
The first assertion follows from Theorem~\ref{period-t1} and (\ref{bd-eq1}).

Now we suppose in addition that $k$ is nonprime and $M_0$ is a non-empty $CW$-subcomplex of $M$.  Let $*$ be a point of $M_0$. Then by Lemma~\ref{comparison}, the inclusions of relative $CW$-complexes 
\begin{eqnarray*} 
(M,*)\longrightarrow (M,M_0)\longrightarrow (M,M)
\end{eqnarray*}
imply
\begin{eqnarray}\label{rel-e59}
o(\xi_{M|*,k})\mid o(\xi_{M|M_0,k})\mid  o(\xi_{M,k}). 
\end{eqnarray}
By Lemma~\ref{rel-p2} and the first assertion, 
\begin{eqnarray}
o(\xi_{M|*,k})&=&o(\xi_{M\setminus\{*\},k-1})\nonumber\\
&=&a_{m,k-1} \text { or } 2^{\rho(m)-\rho(m-1)}a_{m,k-1},\label{0305-e2}\\
o(\xi_{M,k})&=&a_{m,k} \text { or } 2^{\rho(m)-\rho(m-1)}a_{m,k}. \label{0305-e3}
\end{eqnarray}
Since $k$ is nonprime, we have $a_{m,k}=a_{m,k-1}$. Consequently,  the second assertion follows from  Lemma~\ref{0321-l2}, (\ref{rel-e59}), (\ref{0305-e2}) and (\ref{0305-e3}). 
\end{proof}

Now we give the proof of Theorem~\ref{main}. Theorem~\ref{main}~(a) follows from  Lemma~\ref{disj-l1} and Lemma~\ref{0321-l2}. We  prove Theorem~\ref{main}~(b) and Theorem~\ref{main}~(c) here.

\begin{proof}[Proof of Theorem~\ref{main}~(b)]
When $m$ is odd, Theorem~\ref{main}~(b) follows as a particular case of Corollary~\ref{bd-hypers}. Hence in order to prove Theorem~\ref{main}~(b), we assume $m=2d$ where $d$ is a positive integer.  

Let $p$ be an odd prime.  
Let $K(-)$ denote the abelian group associated with the abelian semi-group of isomorphism classes of complex vector bundles under the Whitney sum operation and $\tilde K(-)$ the reduced generalized cohomology group associated to $K(-)$.  
 
We have an  Atiyah-Hirzebruch Spectral Sequence with $E_2$-page   
\begin{eqnarray}\label{0326-e4}
E_2^{i,j}=H^i(F(S^{2d},p)/\Sigma_p;K^j(*)).
\end{eqnarray} 
This spectral sequence converges to a filtration of $K^{i+j} (F(S^{2d},p)/\Sigma_p)$ in the $E_\infty$-page.
We notice that $K^j(*)$ is isomorphic to  $\mathbb{Z}$ if $j$ is even and  $0$ if $j$ is odd.   Hence with the help of Lemma~\ref{rational},   the only differential whose domain and target are possible to have $\mathbb{Z}$-summands at the same time is
\begin{eqnarray*}
d_{4d-2}: E^{0,2t}_{4d-2}\longrightarrow E^{4d-1,2t-4d+2}_{4d-2}, \text{\ \ }t\in\mathbb{Z}.
\end{eqnarray*}
Since $(4d-1)+(2t-4d+2)$ is odd, $d_{4d-2}$ does not create new torsion parts of $K^0(F(S^{2d},p)/\Sigma_p)$.  Hence all the differentials do not create new  torsion parts of $K^0(F(S^{2d},p)/\Sigma_p)$,  and the $p$-torsion  part of  (\ref{0326-e4}), with $i+j=0$,  converges to  
 \begin{eqnarray*}
\text{Tor}_p(K^0(F(S^{2d},p)/\Sigma_p)). 
\end{eqnarray*}  
It follows with the help of Lemma~\ref{0207-l1} that 
\begin{eqnarray}\label{p3-e2}
|\text{Tor}_p ( \tilde K(F(S^{2d},p)/\Sigma_p))|\leq  \left\{
\begin{aligned}
&p^{d-1},&  \text{\ \ \ if } d\geq 2, \\
&0,&  \text{\ \ \   if }  d=1. 
\end{aligned}
\right.
\end{eqnarray}   
Here $| - |$ denotes the order of a group. 
Since the map of complexification of vector bundles, restricted to the odd torsion part, is injective (cf. \cite[p. 142]{swy}), it follows that
\begin{eqnarray}\label{02152016-e1}
s_p(\xi_{S^{2d},p})&=&s_p(\xi_{S^{2d},p}\otimes\mathbb{C})\nonumber\\
&\leq & \left\{
\begin{aligned}
&|\text{Tor}_p ( \tilde K(F(S^{2d},p)/\Sigma_p))|,& \text{ \ \ \ if } d\geq 2, \\
&1,& \text{ \ \ \ if } d=1. 
\end{aligned}
\right.  
\end{eqnarray}
On the other hand, for any $k\geq p$, it follows from Lemma~\ref{comparison} that 
\begin{eqnarray}\label{02152016-e2}
s_p(\xi_{S^{2d},k})&\geq & s_p(\xi_{\mathbb{R}^{2d},k}) \nonumber\\
&=&p^{d-1}. 
\end{eqnarray}
Consequently, with the help of   Lemma~\ref{bd-c-p1},  it follows that all the inequalities in (\ref{p3-e2}) - (\ref{02152016-e2}) hold and 
\begin{eqnarray}\label{0308-e1}
s_p(\xi_{S^{2d},k})= p^{d-1}. 
\end{eqnarray}
 Moreover, by  Lemma~\ref{comparison},
\begin{eqnarray}\label{0308-e3}
s_2(\xi_{S^{2d},k})&\geq& s_2(\xi_{\mathbb{R}^{2d},k})\nonumber\\
&=&2^{\rho(2d-1)}.
\end{eqnarray}
Therefore,  it follows from  Corollary~\ref{01192016-e11},  (\ref{0308-e1}) and (\ref{0308-e3})  that
\begin{eqnarray*} 
s(\xi_{S^{2d},k})= a_{2d,k}
\text{ \ \ or\ \  } 2^{\rho(2d)-\rho(2d-1)} a_{2d,k}.
\end{eqnarray*}
The first assertion of Theorem~\ref{main}~(b) follows with the help of Lemma~\ref{disj-l1}.  

To prove the second assertion, we suppose that $k$ is nonprime. Then the proof  is essentially the same with the proof of the second assertion of Corollary~\ref{bd-hypers}.  
\end{proof}

\begin{proof}[Proof of Theorem~\ref{main}~(c)]
The first assertion of Theorem~\ref{main}~(c) follows from the following two cases.

{\sc Case~1. } $n\geq 2$. 

Then it follows from Lemma~\ref{disj-l1}  that the stable order of $\xi_{\coprod_n S^m,k}$ is the smallest common multiple of  
\begin{eqnarray*}
\{s(\xi_{S^m,t})\mid 1\leq t\leq k\}. 
\end{eqnarray*}
Hence it follows with the help of Lemma~\ref{bd-c-p1} that for any prime $p\leq k$,  
\begin{eqnarray}\label{disj-e88}
s_p(\xi_{\coprod_n S^m,k})= s_p(\xi_{S^m,p}). 
\end{eqnarray}
Theorem~\ref{main}~(b) gives that for any odd prime $p$,
\begin{eqnarray}\label{disj-e3}
s_p(\xi_{S^m,p})=p^{[\frac{m-1}{2}]}.
\end{eqnarray}
And Corollary~\ref{01192016-e11} gives that 
\begin{eqnarray}\label{disj-e33}
s_2(\xi_{ S^m,2})=2^{\rho(m)}.
\end{eqnarray}
The assertion follows from (\ref{disj-e88}) - (\ref{disj-e33}).

{\sc Case~2. }  $n=1$, $m=2$ and $k$ is even. 

Then it follows from \cite[Theorem 1.11]{birman} or \cite[p. 467]{inv} that 
\begin{eqnarray}
H_1(F(S^2,k)/\Sigma_k; \mathbb{Z})&\cong& \mathbb{Z}_{2k-2}\nonumber\\
&\cong& \mathbb{Z}_2\oplus\mathbb{Z}_{k-1}. \label{0317-e1}
\end{eqnarray}
Let $\beta$ be the Bockstein homomorphism  
 associated with the coefficient sequence
\begin{eqnarray*}
0\longrightarrow\mathbb{Z}_2\longrightarrow\mathbb{Z}_4\overset{}{\longrightarrow}\mathbb{Z}_2\longrightarrow 0. \end{eqnarray*}
By the Universal Coefficient Theorem and (\ref{0317-e1}),  it is direct to verify that $\beta$ is injective. It follows that for any non-zero element $x$ in $H^1(F(S^2,k)/\Sigma_k;\mathbb{Z}_2)$,  
$
x^2=\beta x
$
is non-zero as well.
Consequently, with the help of Lemma~\ref{0309-l1}, we obtain 
\begin{eqnarray}\label{0309-e3}
s(\xi_{S^2,k})>2. 
\end{eqnarray}
It follows from Theorem~\ref{main}~(b) and (\ref{0309-e3}) that
$s(\xi_{S^2,k})=4$.
The assertion follows.

To prove the second assertion, we assume that $k$  is nonprime. Then the proof is essentially the same with the proof of  the second assertion of Theorem~\ref{period-t1}.  
\end{proof}

Finally, we give a proof of Corollary~\ref{2017-c1}. 
\begin{proof}[Proof of Corollary~\ref{2017-c1}]
Given a vector bundle $\xi$ and  positive integers $r$ and $t$, we notice 
that $(\xi^{\oplus r})^{\oplus t}$ is trivial if and only if $\xi^{\oplus rt}$ is trivial.  Hence the order of $\xi^{\oplus r}$ is $o(\xi)/\text{gct}(r,o(\xi))$. By letting $\xi$ be $\xi_{\coprod_nS^m,k}$ and $\xi_{\coprod_nS^m\mid M_0,k}$ respectively, we obtain Corollary~\ref{2017-c1}  from Theorem~\ref{main}~(b).
\end{proof}

\section{Periodicity of suspensions of a cofibre space}\label{2.3}

We review Lemma~\ref{period-l4} on the periodicity of iterated suspensions of certain spaces which is proved in an unpublished manuscript of Professor Frederick R. Cohen.  From Lemma~\ref{period-l4} we derive Corollary~\ref{adic-t1}, which will be used in Section~\ref{sss5}. 

\smallskip

Let $\Pi$ be a subgroup of $\Sigma_k$ and $Z$ a topological space with a free  
$\Pi$-action. Let $X$ be a space with non-degenerate base-point $*$ and $X^{(k)}$ the $k$-fold self-smash product of $X$. Let $D(k,Z,\Pi, X)$ be the cofibre of the natural inclusion from $Z\times _{\Pi}\{*\}$ into  $Z\times_{\Pi} X^{(k)}$.  The following lemma, as well as its proof,  is from an unpublished manuscript given by Professor Frederick R. Cohen. 

\begin{lemma}\label{period-l4}  Suppose the vector bundle 
\begin{eqnarray*} 
\eta_{Z,\Pi,k}: \mathbb{R}^k\longrightarrow Z\times_\Pi\mathbb{R}^k\longrightarrow Z/\Pi
\end{eqnarray*}
 has order $l$. Then for any positive integer $t$, there is a homeomorphism
\begin{eqnarray}\label{period-e6}
D(k,Z,\Pi, \Sigma^{tl}X)\longrightarrow \Sigma^{ktl}D(k,Z,\Pi,X).
\end{eqnarray}
\end{lemma}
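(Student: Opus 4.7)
The plan is to produce the homeomorphism (\ref{period-e6}) directly from a trivialization of $\eta_{Z,\Pi,k}^{\oplus tl}$, which exists since the order $l$ divides $tl$. To set things up, first I would rewrite the cofibre description as a twisted half-smash product
\[
D(k,Z,\Pi,X) \ \cong\ Z_+ \wedge_\Pi X^{(k)} \ :=\ (Z_+ \wedge X^{(k)})/\Pi,
\]
and identify the $k$-fold smash of a suspension as a representation sphere smashed with $X^{(k)}$, namely
\[
(\Sigma^{tl} X)^{(k)} \ \cong\ S^{W} \wedge X^{(k)}
\]
$\Pi$-equivariantly, where $W$ denotes the $\Pi$-representation $V^{\oplus tl}$ with $V=\mathbb{R}^k$ the standard permutation representation of $\Pi\subseteq \Sigma_k$. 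The reshuffling of smash factors is natural, and hence compatible with the block-permutation action that identifies $(\mathbb{R}^{tl})^k$ with $V\otimes \mathbb{R}^{tl}$.

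Next, I would exploit the order hypothesis. The bundle $\eta_{Z,\Pi,k}^{\oplus tl} = Z\times_\Pi W$ is trivial of rank $ktl$ over $Z/\Pi$, and a trivialization pulls back along the principal $\Pi$-cover $Z\to Z/\Pi$ to a $\Pi$-equivariant bundle isomorphism
\[
\tilde\Phi\colon Z\times W \ \longrightarrow\ Z\times\mathbb{R}^{ktl}
\]
over $Z$ for which the $\Pi$-action is diagonal on the source but acts only on the $Z$-factor (trivially on $\mathbb{R}^{ktl}$) on the target. Fiberwise one-point compactification extends $\tilde\Phi$ to a $\Pi$-equivariant homeomorphism $Z_+\wedge S^W \to Z_+\wedge S^{ktl}$ with the same respective $\Pi$-actions.

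Smashing with $\mathrm{id}_{X^{(k)}}$ and passing to $\Pi$-orbits then gives
\[
Z_+ \wedge_\Pi (S^W\wedge X^{(k)}) \ \cong\ Z_+ \wedge_\Pi (S^{ktl}\wedge X^{(k)}).
\]
By the first paragraph, the left-hand side equals $D(k,Z,\Pi,\Sigma^{tl} X)$. On the right-hand side, since $\Pi$ acts trivially on $S^{ktl}$, the sphere factor may be pulled out of the orbit space, yielding $S^{ktl}\wedge(Z_+\wedge_\Pi X^{(k)}) = \Sigma^{ktl} D(k,Z,\Pi,X)$, which is the desired homeomorphism.

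I expect the delicate step to be verifying that the bundle trivialization can indeed be promoted to a $\Pi$-equivariant homeomorphism with the mixed action described (diagonal on the source, trivial on $S^{ktl}$ in the target). This is the standard correspondence between bundle isomorphisms over $Z/\Pi$ and $\Pi$-equivariant isomorphisms of the pullbacks over $Z$, but the bookkeeping must be handled carefully so that after taking $\Pi$-orbits the $S^{ktl}$ factor truly splits off as a suspension coordinate.
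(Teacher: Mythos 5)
Your proposal is correct and is essentially the paper's own argument: both proofs untwist the bundle $Z\times_\Pi(\mathbb{R}^{tl})^k$ using the trivialization of $\eta_{Z,\Pi,k}^{\oplus tl}$, fiberwise one-point compactify, smash with $X^{(k)}$, and observe that after untwisting the $S^{ktl}$ factor carries the trivial $\Pi$-action and splits off as a $ktl$-fold suspension. The only difference is notational — you phrase the construction via half-smash products and the representation sphere $S^{W}$, while the paper works with the explicit homeomorphism $\bar\Theta_X$ and quotients by the fat-wedge subspaces $\Gamma$ and $\Lambda$ — and the "delicate step" you flag is exactly the standard correspondence the paper's bundle diagram encodes.
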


\begin{proof} 
Let $\Sigma_k$ act on the $k$-fold Cartesian product $X^k$ from the right by permuting coordinates. This action induces an action of $\Pi$ on $X^{(k)}$.  The trivialization of $\eta_{Z,\Pi,k}^{\oplus tl}$ induces an isomorphism of vector bundles
\begin{eqnarray*}
\xymatrix{
\mathbb{R}^{ktl} \ar[d]\ar[rr]^{\theta} &&\mathbb{R}^{ktl}\ar[d]\\
Z\times_\Pi (\mathbb{R}^{tl})^k\ar[rr]^{\Theta} \ar[d]&& (Z/\Pi)\times (\mathbb{R}^{tl})^k\ar[d]\\
Z/\Pi\ar@{=}[rr] &&Z/\Pi
}
\end{eqnarray*}
for which $\theta$ is a linear isomorphism of $\mathbb{R}^{ktl}$. We notice that the map $\Theta$ extends to a homeomorphism 
\begin{eqnarray}\label{period-e13}
\bar \Theta: Z\times_\Pi (S^{tl})^k\longrightarrow  (Z/\Pi)\times (S^{tl})^k
\end{eqnarray}
by regarding $S^{tl}$ as the one-point compactification of $\mathbb{R}^{tl}$ and sending the added point $\infty$ to itself. Moreover, (\ref{period-e13}) extends to a homeomorphism
\begin{eqnarray}\label{period-e14}
\bar \Theta_X: (Z\times X^{(k)})\times_\Pi (S^{tl})^k\longrightarrow  (Z\times_\Pi X^{(k)})\times (S^{tl})^k
\end{eqnarray}
by sending $X^{(k)}$ to itself via the identity map.

 Let $\Gamma$ denote the subspace of $(Z\times X^{(k)})\times_{\Pi} (S^{tl})^k$ represented by the points $(z,$ $(x_1,$ $\cdots,$ $x_k),$ $(v_1,$ $\cdots,$ $v_k))$ where either   some $v_i$ is $\infty$ in ${S}^{tl}$ or some $x_j$ is $*$ in $X$. Similarly, let $\Lambda$ denote the subspace of $(Z\times_\Pi X^{(k)})\times (S^{tl})^k$ represented by the points $(z,$ $(x_1,$ $\cdots,$ $x_k),$ $(v_1,$ $\cdots,$ $v_k))$ where either some $v_i$ is $\infty$ in ${S}^{tl}$ or some $x_j$ is $*$ in $X$.
We notice that $\bar \Theta_X$ maps $\Gamma$ onto $\Lambda$. Hence $\bar \Theta_X$ induces a homeomorphism on the level of quotient spaces and  gives (\ref{period-e6}).
\end{proof}

The following corollary is a consequence of  Lemma~\ref{comparison} and Lemma~\ref{period-l4}. 

\begin{corollary}\label{adic-t1}
Let $(M,M_0)$ be a relative finite $CW$-complex.  Then there are homotopy equivalences
\begin{eqnarray}\label{2017-6}
\Sigma^{kto(\xi_{M,k})}(D_k(M,M_0;X))\longrightarrow D_k(M,M_0;\Sigma^{to(\xi_{M,k})}X).
\end{eqnarray} 
\end{corollary}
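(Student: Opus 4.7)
The plan is to reduce the statement to Lemma~\ref{period-l4} by exhibiting $D_k(M,M_0;X)$ as the cofibre of two copies of the construction appearing in that lemma, and then applying the lemma with a single coherent choice of trivialization.

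First I would unwind the definition $D_k(M,M_0;X)=C_k(M,M_0;X)/C_{k-1}(M,M_0;X)$ to obtain the tautological identification
$$D_k(M,M_0;X)\;\cong\;D(k,F(M,k),\Sigma_k,X)\big/D(k,F(M|M_0,k),\Sigma_k,X),$$
where the map is induced by the inclusion $F(M|M_0,k)\hookrightarrow F(M,k)$. Concretely, $D_k(M,M_0;X)$ is obtained from $F(M,k)\times_{\Sigma_k}X^k$ by collapsing the locus where some $x_i=*$ or some $m_i\in M_0$; collapsing the first condition produces $D(k,F(M,k),\Sigma_k,X)$, and then collapsing the image of $F(M|M_0,k)\times_{\Sigma_k}X^k$ in this quotient---which is exactly $D(k,F(M|M_0,k),\Sigma_k,X)$---gives $D_k(M,M_0;X)$.

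Next I would apply Lemma~\ref{period-l4} with $l=o(\xi_{M,k})$ simultaneously to $Z_1=F(M,k)$ and to $Z_2=F(M|M_0,k)$, each with its free $\Sigma_k$-action. The hypothesis is met for $Z_1$ since $\eta_{Z_1,\Sigma_k,k}=\xi_{M,k}$ has order $l$ by definition; for $Z_2$, Lemma~\ref{comparison} yields $o(\xi_{M|M_0,k})\mid l$, so $\xi_{M|M_0,k}^{\oplus tl}$ is trivial as well and the proof of Lemma~\ref{period-l4} still goes through.

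The step needing care, and what I expect to be the only real obstacle, is arranging the two resulting homeomorphisms to be compatible with the inclusion $Z_2\hookrightarrow Z_1$, so that the homeomorphism for $Z_1$ descends to the cofibre. For this I would fix a trivialization $\Theta$ of $\xi_{M,k}^{\oplus tl}$ and take its pullback $(\tilde i/\Sigma_k)^*\Theta$ as the trivialization of $\xi_{M|M_0,k}^{\oplus tl}$; the isomorphism (\ref{0305-e1}) from Lemma~\ref{comparison} makes this choice legitimate. With this coherent choice, the map $\bar\Theta_X$ constructed in the proof of Lemma~\ref{period-l4} for $Z_1$ restricts on the subspace coming from $Z_2$ to the corresponding map for $Z_2$. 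Hence $\bar\Theta_X$ descends to a homeomorphism
$$D_k(M,M_0;\Sigma^{to(\xi_{M,k})}X)\;\longrightarrow\;\Sigma^{kto(\xi_{M,k})}D_k(M,M_0;X),$$
whose inverse is the homotopy equivalence asserted in the corollary.
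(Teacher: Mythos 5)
Your proposal is correct and follows essentially the same route as the paper: identify $D_k(M,M_0;X)$ with the quotient $D(k,F(M,k),\Sigma_k,X)/D(k,F(M|M_0,k),\Sigma_k,X)$ and apply Lemma~\ref{period-l4}, using Lemma~\ref{comparison} to see that $o(\xi_{M|M_0,k})$ divides $o(\xi_{M,k})$. Your explicit choice of the pulled-back trivialization on $F(M|M_0,k)$ so that the two homeomorphisms are compatible and descend to the cofibre is exactly the point the paper leaves implicit, so this is a welcome refinement rather than a deviation.
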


\begin{proof} 
For a $\Sigma_k$-space $Z$, it follows from a direct computation  that
\begin{eqnarray}\label{period-e7}
(Z\vee S^0)\wedge _{\Sigma_k} X^{(k)} 
&\simeq& (Z\times_{\Sigma_k} X^{(k)})/ (Z\times_{\Sigma_k} *)\nonumber\\
&\simeq &D(k,Z,\Sigma_k, X). 
\end{eqnarray}
Since the smash product distributes over the wedge, with the help of (\ref{period-e7}), it follows that  \begin{eqnarray}\label{period-e8}
D_k(M,M_0;X)&\simeq& C_k(M,M_0;X)/C_{k-1}(M,M_0;X)\nonumber\\
&\simeq & (F(M,k)/F(M|M_0,k))\wedge _{\Sigma_k} X^{(k)}\nonumber\\
&\simeq& (F(M,k)\wedge _{\Sigma_k}X^{(k)})/ (F(M|M_0,k)\wedge_{\Sigma_k} X^{(k)})\nonumber\\
&\simeq& ((F(M,k)\vee S^0)\wedge _{\Sigma_k}X^{(k)})/ ((F(M|M_0,k)\vee S^0)\wedge_{\Sigma_k} X^{(k)})\nonumber\\
&\simeq& D(k,F(M,k),\Sigma_k,X)/D(k,F(M|M_0,k),\Sigma_k, X).
\end{eqnarray}
It follows from Lemma~\ref{period-l4} and (\ref{period-e8}) that  there are homotopy equivalences
\begin{eqnarray*}
\Sigma^{kt(o(\xi_{M,k}), o(\xi_{M|M_0,k}))}(D_k(M,M_0;X))\longrightarrow D_k(M,M_0;\Sigma^{t(o(\xi_{M,k}),o(\xi_{M|M_0,k}))}X).
\end{eqnarray*} 
Here $(o(\xi_{M,k}), o(\xi_{M|M_0,k}))$ denotes the smallest common multiple of $ o(\xi_{M,k})$ and $ o(\xi_{M|M_0,k})$. 
By Lemma~\ref{comparison}, we see that  $ o(\xi_{M|M_0,k})$ can always divide $ o(\xi_{M,k})$. Hence the homotopy equivalences (\ref{2017-6}) follows. 
\end{proof}

\section{Stable homotopy types of $k$-adic constructions}\label{sss5}

Let $(M,M_0)$ be a relative $CW$-complex, $t$ be a positive integer  and $k\geq 2$. With the help of the order of (\ref{01292016-e1}), the stable homotopy types of $D_k(M,M_0;\Sigma^nX)$ exhibit a natural  periodic behavior as $n$ varies.

\begin{proposition}\label{adic-c1}
For any space $X$ with a non-degenerate base-point, 
\begin{eqnarray*}
\Sigma^{   kta_{m,k}  }(D_k(\mathbb{R}^m;X))\simeq  D_k(\mathbb{R}^m;\Sigma^{  ta_{m,k}  }X).
\end{eqnarray*}
Moreover,  if $k$ is nonprime and $M_0$ is a non-empty $CW$-subcomplex of $\mathbb{R}^m$,   then 
\begin{eqnarray*}
\Sigma^{   kta_{m,k}  }(D_k(\mathbb{R}^m,M_0;X))\simeq  D_k(\mathbb{R}^m,M_0;\Sigma^{  ta_{m,k}  }X).
\end{eqnarray*}
\end{proposition}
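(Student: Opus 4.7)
The plan is to derive Proposition~\ref{adic-c1} as an immediate consequence of Corollary~\ref{adic-t1}, feeding in the computation of the order $o(\xi_{\mathbb{R}^m,k})$ supplied by Theorem~\ref{period-t1}. Recall that Corollary~\ref{adic-t1} provides, for any relative $CW$-complex $(M,M_0)$, a homotopy equivalence
$$\Sigma^{kto(\xi_{M,k})}(D_k(M,M_0;X))\simeq D_k(M,M_0;\Sigma^{to(\xi_{M,k})}X),$$
so essentially all the work is already done; what remains is to identify the exponent as $a_{m,k}$ in the two cases of interest.

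For the first assertion I would take $(M,M_0)=(\mathbb{R}^m,\emptyset)$ in Corollary~\ref{adic-t1}. The unrelative clause of Theorem~\ref{period-t1} gives $o(\xi_{\mathbb{R}^m,k})=a_{m,k}$, and substitution into the displayed equivalence yields the first formula immediately. For the second assertion I would instead take $(M,M_0)=(\mathbb{R}^m,M_0)$ with $k$ nonprime. The suspension index in Corollary~\ref{adic-t1} is again $o(\xi_{\mathbb{R}^m,k})=a_{m,k}$ by the same clause of Theorem~\ref{period-t1}. The nonprime hypothesis plays the role of guaranteeing, via the second clause of Theorem~\ref{period-t1}, that the companion bundle $\xi_{\mathbb{R}^m|M_0,k}$ also has order $a_{m,k}$, which is compatible with (and divides, as in the proof of Corollary~\ref{adic-t1}) the absolute order actually appearing in the suspension exponent. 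Substitution once more delivers the stated equivalence.

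There is no substantial obstacle: the proof is essentially a two-line substitution once Theorem~\ref{period-t1} and Corollary~\ref{adic-t1} are both available. The only point worth noting is that Corollary~\ref{adic-t1} was stated for relative finite $CW$-complexes, while $\mathbb{R}^m$ is non-compact; this is harmless because the underlying mechanism, namely Lemma~\ref{period-l4}, depends only on the existence of a finite order $l$ for the bundle $\xi_{\mathbb{R}^m,k}$, and Theorem~\ref{period-t1} supplies exactly such an order. Thus the homeomorphism constructed in the proof of Lemma~\ref{period-l4}, together with the splitting argument of Corollary~\ref{adic-t1}, transfers verbatim to the $(\mathbb{R}^m,M_0)$ setting.
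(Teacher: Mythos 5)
Your proposal is correct and coincides with the paper's own (one-line) proof: Proposition~\ref{adic-c1} is obtained by applying Theorem~\ref{period-t1} to Corollary~\ref{adic-t1}. Your additional remarks --- that the nonprime hypothesis is only needed to control $o(\xi_{\mathbb{R}^m|M_0,k})$ via the second clause of Theorem~\ref{period-t1}, and that the mechanism of Lemma~\ref{period-l4} requires only finiteness of the order rather than compactness of $M$ --- are accurate and, if anything, make the argument more careful than the paper's.
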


By applying  Theorem~\ref{period-t1} to Corollary~\ref{adic-t1}, we obtain  Proposition~\ref{adic-c1}.    

\begin{proposition}\label{adic-c9}
Let $M$ be a hypersurface in $\mathbb{R}^{m+1}$  and  $m\equiv 3,5,7$ mod $8$.  Then for any space $X$ with a non-degenerate base-point,  
\begin{eqnarray*}
\Sigma^{   kta_{m,k}  }(D_k(M;X))\simeq  D_k(M;\Sigma^{  ta_{m,k}  }X).
\end{eqnarray*}
Moreover, if $k$ is nonprime and $M_0$ is a non-empty $CW$-subcomplex of $M$, then  
 \begin{eqnarray*}\label{01042016-e1}
\Sigma^{   kta_{m,k}  }(D_k(M,M_0;X))\simeq  D_k(M,M_0;\Sigma^{  ta_{m,k}  }X).   
\end{eqnarray*}
\end{proposition}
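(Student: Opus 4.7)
The plan is to deduce Proposition~\ref{adic-c9} directly from Corollary~\ref{bd-hypers} and Corollary~\ref{adic-t1}, in complete analogy with how Proposition~\ref{adic-c1} was derived from Theorem~\ref{period-t1}. Corollary~\ref{adic-t1} already provides the homotopy equivalence
\begin{eqnarray*}
\Sigma^{kto(\xi_{M,k})}(D_k(M,M_0;X))\simeq D_k(M,M_0;\Sigma^{to(\xi_{M,k})}X),
\end{eqnarray*}
so what remains is simply to identify $o(\xi_{M,k})$ with $a_{m,k}$ under the stated congruence hypothesis on~$m$.

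First I will observe that the residues $m\equiv 3,5,7\pmod 8$ are precisely those odd residues not congruent to $1\pmod 8$. In particular, none of these values of $m$ is congruent to $0,1,2$ or $4$ mod $8$, so by the definition of $\rho(t)$ we have $\rho(m)=\rho(m-1)$, and therefore $2^{\rho(m)-\rho(m-1)}=1$. Combining this with Corollary~\ref{bd-hypers} (which applies because $m$ is odd and $M$ is a hypersurface in $\mathbb{R}^{m+1}$), both possibilities listed there collapse to a single value, namely
\begin{eqnarray*}
o(\xi_{M,k})=a_{m,k}.
\end{eqnarray*}

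Substituting this identification into Corollary~\ref{adic-t1} yields the first assertion immediately. For the second assertion I take $k$ nonprime and $M_0$ a non-empty $CW$-subcomplex of $M$; Corollary~\ref{bd-hypers} then gives $o(\xi_{M|M_0,k})=a_{m,k}$ by the same $\rho$-computation, and Corollary~\ref{adic-t1} (which was stated for the relative setting and uses the divisibility $o(\xi_{M|M_0,k})\mid o(\xi_{M,k})$ from Lemma~\ref{comparison}) furnishes the desired equivalence.

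There is no real obstacle here, so the proof should be essentially a one-line invocation; the only mild bookkeeping is verifying the residue calculation $\rho(m)=\rho(m-1)$ for $m\equiv 3,5,7\pmod 8$, and noting that this is exactly why the hypothesis excludes $m\equiv 1\pmod 8$ among the odd residues (in contrast to Corollary~\ref{0318-c1}, where the even case $m\equiv 6\pmod 8$ also appears but is not relevant here because Corollary~\ref{bd-hypers} requires $m$ odd).
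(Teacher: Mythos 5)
Your proof is correct and follows exactly the paper's route: the paper simply states that Proposition~\ref{adic-c9} is obtained by applying Corollary~\ref{bd-hypers} to Corollary~\ref{adic-t1}, and your verification that $\rho(m)=\rho(m-1)$ for $m\equiv 3,5,7\pmod 8$ (so that both alternatives in Corollary~\ref{bd-hypers} collapse to $a_{m,k}$) is the correct bookkeeping behind that one-line deduction.
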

By applying  Corollary~\ref{bd-hypers} to Corollary~\ref{adic-t1}, we obtain  Proposition~\ref{adic-c9}.

\begin{proposition}\label{0215-c1}
Suppose either    (i). $n=1$, $m=2$ and $k$ is even,  or (ii). $n\geq 2$. Then for any space $X$ with a non-degenerate base-point, 
 \begin{eqnarray*} 
\Sigma^{  2^{\rho(m)-\rho(m-1)} kta_{m,k}  }(D_k(\coprod_n S^{m};X))\simeq  D_k(\coprod_n S^{m};\Sigma^{ 2^{\rho(m)-\rho(m-1)} ta_{m,k}  }X).   
\end{eqnarray*}
Moreover, if $k$ is nonprime and $M_0$  is  a non-empty $CW$-subcomplex of $\coprod_n S^{m}$, then  
 \begin{eqnarray*}\label{03262016-e1}
\Sigma^{2^{\rho(m)-\rho(m-1)} kta_{m,k}  }(D_k(\coprod_n S^{m},M_0;X))\simeq  D_k(\coprod_n S^{m},M_0;\Sigma^{ 2^{\rho(m)-\rho(m-1)}  ta_{m,k}  }X).   
\end{eqnarray*}
\end{proposition}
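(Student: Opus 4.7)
The plan is to derive Proposition~\ref{0215-c1} as a direct formal consequence of Theorem~\ref{main}~(c) and Corollary~\ref{adic-t1}, mirroring the one-line derivations of Proposition~\ref{adic-c1} from Theorem~\ref{period-t1} and Proposition~\ref{adic-c9} from Corollary~\ref{bd-hypers}. The first ingredient, Theorem~\ref{main}~(c), tells us that under hypothesis (i) or (ii), the order $o(\xi_{\coprod_n S^m,k})$ equals exactly $2^{\rho(m)-\rho(m-1)}\,a_{m,k}$, which is precisely the integer appearing in the suspension exponents of the proposition. The second ingredient, Corollary~\ref{adic-t1}, provides the general periodicity
\[
\Sigma^{kt\,o(\xi_{M,k})}D_k(M,M_0;X)\simeq D_k(M,M_0;\Sigma^{t\,o(\xi_{M,k})}X)
\]
for any relative finite $CW$-complex $(M,M_0)$.

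For the absolute statement, I will take $(M,M_0)=(\coprod_n S^m,\varnothing)$ in Corollary~\ref{adic-t1} and substitute the order value supplied by Theorem~\ref{main}~(c); this yields the first homotopy equivalence in Proposition~\ref{0215-c1} verbatim. For the moreover clause, I will take $(M,M_0)=(\coprod_n S^m,M_0)$ with $M_0$ a non-empty $CW$-subcomplex, noting that the exponent in Corollary~\ref{adic-t1} depends only on the \emph{absolute} order $o(\xi_{\coprod_n S^m,k})$ (by the final paragraph of that corollary's proof, where Lemma~\ref{comparison} is used to absorb $o(\xi_{M|M_0,k})$ into $o(\xi_{M,k})$). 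Hence the same substitution gives the relative equivalence, and the hypothesis ``$k$ is nonprime'' is inherited purely to parallel the hypothesis under which Theorem~\ref{main}~(c) computes the relative order.

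Since both Theorem~\ref{main}~(c) and Corollary~\ref{adic-t1} are already established, I do not anticipate any genuine obstacle; the argument is essentially a substitution. The only point requiring care is matching the ``$k$ nonprime'' hypothesis between the order statement and the periodicity statement, and this is handled automatically by invoking Corollary~\ref{adic-t1} in the relative form with the already-computed absolute order.
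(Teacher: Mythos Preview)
Your proposal is correct and matches the paper's own proof, which is literally the single sentence ``By applying Theorem~\ref{main}~(c) to Corollary~\ref{adic-t1}, we obtain Proposition~\ref{0215-c1}.'' Your additional observation that Corollary~\ref{adic-t1} already uses only the absolute order $o(\xi_{M,k})$, so the ``$k$ nonprime'' hypothesis in the relative clause is not actually needed for the periodicity to hold, is a valid side remark.
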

By applying Theorem~\ref{main}~(c)  to Corollary~\ref{adic-t1}, we obtain  Proposition~\ref{0215-c1}. 

\section{Further discussions}

We briefly address and discuss further questions. 
Suppose $\Sigma_k$ acts on $\mathbb{R}^L$ freely. Let $Y$ be a nonempty $\Sigma_k$-invariant subspace of $\mathbb{R}^L$.  Then 
we have an induced free $\Sigma_k$-action on $Y$ 
and an associated vector bundle (cf. Remark~\ref{remark2.2})
\begin{eqnarray*}
\xi_Y: \mathbb{R}^k\longrightarrow Y\times_{\Sigma_k}\mathbb{R}^k\longrightarrow Y/\Sigma_k. 
\end{eqnarray*}
In particular, $\xi_{\mathbb{R}^L}$ is the associated vector bundle of the covering map $\pi: \mathbb{R}^L\longrightarrow\mathbb{R}^L/\Sigma_k$.  The embedding $\iota: Y\longrightarrow \mathbb{R}^L$ gives an induced embedding $\iota/\Sigma_k: Y/\Sigma_k\longrightarrow \mathbb{R}^L/\Sigma_k$.  And $\xi_Y$ is the pull-back vector bundle $(\iota/\Sigma_k)^*\xi_{\mathbb{R}^L}$. 
 Therefore,  by an analogous argument of Lemma~\ref{comparison}, if  $o(\xi_{\mathbb{R}^L})$ (resp. $s(\xi_{\mathbb{R}^L})$) is finite, then $o(\xi_Y)$ (resp. $s(\xi_Y)$) is finite as well  and $o(\xi_{Y})\mid o(\xi_{\mathbb{R}^L})$ (resp. $s(\xi_{Y})\mid s(\xi_{\mathbb{R}^L})$).   

\begin{enumerate}[$\text{\ }$]
\item
{\bf Question~1}.  What features of the $\Sigma_k$-action on $\mathbb{R}^L$  could ensure  that  $\xi_{\mathbb{R}^L}$ is of finite (stable) order?
\item
{\bf Question~2}. What features of $Y$   determine the (stable) order of $\xi_Y$?
\end{enumerate}



\bigskip

\bigskip

Addresses:

$^{\it a}$ School of Mathematics and Computer Science, Guangdong Ocean University, 1 Haida Road, Zhanjiang, China, 524088.
 
 $^{\it b}$ Department of Mathematics,   National University of Singapore, Singapore, 119076.

\smallskip

Email Address: sren@u.nus.edu 



\end{document}